\theoremstyle{plain}
\newtheorem{thm}{Theorem}[section]
\newtheorem{cor}[thm]{Corollary}
\newtheorem{lem}[thm]{Lemma}
\newtheorem{conj}[thm]{Conjecture}
\newtheorem{prop}[thm]{Proposition}
\tikzstyle{b}=[circle,draw=black,fill=black,thick,inner sep=1.8pt]
\tikzstyle{w}=[circle,draw=black,fill=white,thick,inner sep=1.8pt]
\tikzstyle{v}=[circle,draw=black,fill=red!50,thick,inner sep=1.8pt]
\tikzstyle{r}=[circle,draw=black,fill=red!50,thick,inner sep=1pt]
\theoremstyle{definition}  %% JG added
\newtheorem{example}{Example}[section]   %% JG added
\long\def\skipit#1{} %%can skip over \par  %% JG added
\begin{document}

\begin{abstract}
Milgram constructed a 28-vertex cubic graph of genus 4 that disproved Duke's conjecture relating Betti number to minimum genus. We apply Milgram's method to construct to find graphs of higher genus violating Duke's conjecture, which gives a sharper bound on that relationship. These graphs are also counterexamples to a related conjecture of Nordhaus \emph{et al.} on the relationship between minimum and maximum genera of graphs. As a side note, we give a simpler proof of correctness for Milgram's method and we show that Duke's conjecture is true for genus at most 3.
\end{abstract}

\title{On Milgram's construction and the Duke embedding conjectures}
\author{Timothy Sun}
\date{}
\maketitle

\section{Introduction}  %% Section 1
\enlargethispage{-12pt}

All graphs in this paper are finite and undirected, and all surfaces in this paper are orientable. Let $G = (V, E)$ be a graph, possibly with self-loops and parallel edges. A natural question regarding the topological properties of a graph is determining into what surfaces that graph embeds. The Edmonds-Heffter correspondence states that any 2-cell embedding $\Pi$ onto a surface can be specified up to equivalence (that is, an orientation-preserving homeomorphism of pairs) by a cyclic ordering of the edge-ends incident with each vertex. For more background on topological graph theory, refer to Gross and Tucker \cite{gt}. Let $S_k$ denote the orientable surface of genus $k$, where $k$ denotes the number of handles. The \emph{(minimum) genus} $\gamma(G)$ is defined to be the minimum value $k$ such that $G$ embeds in $S_k$. Similarly, we define $\Gamma(G)$ to be the \emph{maximum genus}. The \emph{(dimension-1) Betti number} $\beta(G)$ is defined to be the quantity $$|E| - |V| + c(G)$$ where $c(G)$ is the number of components. 

Let $G_1$ and $G_2$ be two graphs, and consider two vertices $v_1 \in V(G_1)$ and $v_2 \in V(G_2)$. The \emph{bar-amalgamation} of $G_1$ and $G_2$ at $v_1$ and $v_2$ is the disjoint union of $G$ and $H$ with an edge between $v_1$ and $v_2$. It is well-known that the minimum genus, maximum genus, and Betti number are all additive under bar-amalgamations. 

Duke \cite{du} demonstrated that for all integers $k$ between $\gamma(G)$ and $\Gamma(G)$, the graph $G$ embeds onto $S_k$, which motivated the study of maximum genus in its own right, by Nordhaus \emph{et al.} \cite{nsw}. Since the number of faces must be positive, it follows from the Euler equation that the maximum genus cannot exceed $\lfloor \beta(G)/2 \rfloor$. Any graph with a 1- or 2-face embedding on a surface has maximum genus exactly equal to $\lfloor \beta(G)/2 \rfloor$ and is said to be \emph{upper-embeddable}.

Two graphs are said to be \emph{homeomorphic} if one can be obtained from the other by subdivisions of edges and smoothing 2-valent vertices. We use the notation $G' \subseteq G$ if $G'$ is homeomorphic to a subgraph of $G$. In a slight abuse of notation, we often do not distinguish between two homeomorphic graphs, except when we emphasize the 2-valent vertices. Furthermore, if $\Pi$ is an embedding of $G$, then let $\Pi|_{G'}$ denote the \emph{induced embedding} on $G'$, where each rotation in $\Pi|_{G'}$ is defined to be the rotation in $\Pi_G$ induced on the edges of $G'$. Since 2-valent vertices only have one cyclic ordering, the graph $G'$ needs only to be homeomorphic to a subgraph of $G$.

Duke \cite{du} was one of the first to consider the relationship between Betti number and minimum genus. From just the Euler equation, one can show that $\beta(G) \geq 2\gamma(G)$, but this bound is weak, even for $\gamma = 1$. Since every nonplanar graph contains either a $K_{3,3}$ or $K_5$ (which have Betti numbers of 4 and 6, respectively), $\beta(G) \geq 4$ when $\gamma(G) \geq 1$. Duke conjectured that a similar relation holds for higher-genus surfaces:

\begin{conj}[Duke's conjecture \cite{du}]
$\beta(G) \geq 4\gamma(G)$ for all graphs $G$.
\label{duke}
\end{conj}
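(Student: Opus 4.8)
The statement above is a \emph{conjecture}, and---as the abstract announces---it is true for genus at most $3$ but false at genus $4$; so any honest ``proof'' can only reach the small-genus regime, and the real point is to locate where the natural argument breaks down. Here is the natural argument. First reduce to well-structured graphs: the Betti number and the minimum genus are additive over the blocks of $G$ (and under bar-amalgamation), and deleting a $1$-valent vertex or suppressing a $2$-valent vertex changes neither $\gamma$ nor $\beta$, so a minimal counterexample to $\beta(G) \geq 4\gamma(G)$, were there one, may be taken connected and $2$-connected (ideally $3$-connected) with minimum degree at least $3$. For such a $G$, the inequality $3|V| \leq \sum_v \deg(v) = 2|E|$ together with $\beta = |E| - |V| + 1$ gives $|V| \leq 2\beta - 2$ and $|E| \leq 3\beta - 3$. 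Hence, for a fixed target genus $k$, the putative counterexamples---those with $\beta \leq 4k-1$---form a \emph{finite} family of graphs on at most $8k-4$ vertices and $12k-6$ edges. For $k = 4$ this ceiling is $28$ vertices and $42$ edges, precisely the parameters of Milgram's cubic graph, which has $\beta = 42 - 28 + 1 = 15 = 4\cdot 4 - 1$; so that graph sits exactly at the extreme of the relevant family.

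The base case $k = 1$ is Kuratowski's theorem: every nonplanar graph contains a subdivision of $K_{3,3}$ or $K_5$, of Betti numbers $4$ and $6$. For $k = 2$ and $k = 3$ I would dispatch the finite family above by exhibiting, for each candidate, a rotation system with enough faces: for a connected graph the Euler equation gives $\gamma = (\beta + 1 - f)/2$, where $f$ is the number of faces, so an embedding with many faces certifies small genus, and deciding whether each bounded-size candidate admits an embedding of genus $\leq k-1$ is a finite check---one that in practice one organizes around the structure of low-$\beta$ graphs and the known toroidal (resp.\ genus-$2$) obstructions rather than carrying out blindly. I expect this is essentially how the paper confirms Duke's conjecture for $\gamma \leq 3$.

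The main obstacle---and the reason the conjecture fails beyond genus $3$---is the absence of a clean inductive step from genus $k-1$ to $k$. Deleting a $K_{3,3}$-subdivision, or even a single edge, from a graph of genus $k$ can drop the genus by more than one, and the handles of a minimum-genus embedding need not localize onto vertex-disjoint subgraphs whose Betti numbers sum to $4k$; the tidy picture of ``$k$ essentially independent toroidal obstructions, each costing $4$ in Betti number'' is simply false. Already $K_8$ has genus $2$ on only $8$ vertices---too few to carry two disjoint toroidal obstructions---and its large Betti number $\beta(K_8) = 21$ is an accident of density, not a structural guarantee. Milgram's construction makes this precise: it packs handles tightly enough to reach genus $4$ with $\beta = 15 < 16$. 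Consequently any treatment of the conjecture must be special to small genus and, crucially, must establish the sharpness of the genus-$3$ threshold---which is exactly what revisiting Milgram's method is for.
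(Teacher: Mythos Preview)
You correctly identify that this is a false conjecture and that an honest argument can only cover the small-genus regime by a finite check; your reduction to a bounded family and your guess that the paper handles $\gamma \leq 3$ this way are both on target. The paper's execution differs from your sketch in two respects. First, the reduction is sharper: one passes not merely to minimum degree $\geq 3$ but to \emph{cubic} graphs (splitting a vertex of degree $>3$ preserves $\beta$ and cannot decrease $\gamma$), and then to triangle-free cubic graphs (a cubic graph irreducible for a surface contains no triangle, since deleting one side of a triangle leaves its endpoints on a common face via the third vertex). These two reductions shrink the search space substantially. Second, the finite check is not organized around known toroidal or genus-$2$ obstructions as you propose, but is a direct computer enumeration: all $97{,}292$ simple triangle-free $2$-connected cubic graphs on $20$ vertices are generated by McKay's \texttt{geng} and found to have genus at most $2$, so no cubic graph with $\beta = 11$ has genus $3$; the cases $\beta \leq 10$ are attributed to an earlier computation of Milgram. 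Your discussion of why the inductive step breaks at $\gamma = 4$ is reasonable intuition but plays no role in the paper, whose argument on that side is purely constructive: $M_4$ exists.
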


A relationship between Betti number and minimum genus is one step towards a generalization of Kuratowski's theorem. If Duke's conjecture were true, then the family of graphs formed by bar-amalgamations of copies of $K_{3,3}$ show that the bound is the best possible. A graph $G$ is \emph{irreducible} for a surface $S$ if every proper subgraph of $G$ is embeddable in $S$, but $G$ itself is not. Duke's conjecture implies that the smallest irreducible graph for $S_k$ has Betti number $4(k+1)$.

A stronger conjecture by Nordhaus \emph{et al.} \cite{nsw} requires that any graph for which equality holds be upper-embeddable.

\begin{conj}[Nordhaus et al. \cite{nsw}]
$\Gamma(G) \geq 2\gamma(G)$ for all graphs $G$. 
\label{nsw}
\end{conj}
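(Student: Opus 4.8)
The plan is not to prove Conjecture~\ref{nsw} but to \emph{refute} it, by exhibiting explicit graphs on which it fails; the same graphs will simultaneously refute Conjecture~\ref{duke}. The starting point is that Conjecture~\ref{nsw} is formally stronger than Conjecture~\ref{duke}: combining the hypothesized inequality $\Gamma(G)\geq 2\gamma(G)$ with the Euler-equation bound $\Gamma(G)\leq\lfloor\beta(G)/2\rfloor$ immediately yields $\beta(G)\geq 4\gamma(G)$. Hence any counterexample $G$ to Conjecture~\ref{duke} that is moreover \emph{upper-embeddable} is automatically a counterexample to Conjecture~\ref{nsw}: from $\beta(G)\leq 4\gamma(G)-1$ together with $\Gamma(G)=\lfloor\beta(G)/2\rfloor$ one gets $\Gamma(G)\leq 2\gamma(G)-1<2\gamma(G)$.

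The first step, then, is to revisit Milgram's $28$-vertex cubic graph $G_M$ of genus $4$. Being connected and cubic it has $\beta(G_M)=|E|-|V|+1=42-28+1=15=4\cdot 4-1$, so it already violates Conjecture~\ref{duke}. To obtain a violation of Conjecture~\ref{nsw} it remains only to confirm that $G_M$ is upper-embeddable --- cubic graphs satisfying mild connectivity hypotheses are upper-embeddable, for instance by Xuong's Betti-deficiency criterion, and one checks this applies to $G_M$ --- whence $\Gamma(G_M)=7<8=2\gamma(G_M)$. For the advertised sharper bound and higher-genus examples, I would iterate Milgram's construction, gluing together the controlling gadgets it uses, to produce for each large $k$ a connected cubic graph $G_k$ with $\gamma(G_k)=k$ and $\beta(G_k)$ as small as the method permits, and track the ratio $\beta(G_k)/\gamma(G_k)$, which the single graph $G_M$ pins at $15/4$.

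Two points need genuine work. The main obstacle is the \emph{lower} bound on the minimum genus: showing that $G_M$, and each $G_k$, does not embed in $S_{k-1}$. Crude Euler/girth counting is far too weak here --- a $28$-vertex cubic graph is nowhere near having the girth that would by itself force genus $4$ --- so one must use the structural core of Milgram's argument, tracking how the edges joining the gadgets are forced to be routed in any hypothetical low-genus embedding until a contradiction emerges; packaging a clean, checkable version of this is exactly the ``simpler proof of correctness for Milgram's method'' promised in the abstract. The second point is upper-embeddability of the whole family, which should reduce, via the additivity of $\gamma$ and $\Gamma$ under bar-amalgamation, to a check on the individual pieces. (For contrast, the companion assertion that Conjecture~\ref{duke} \emph{does} hold when $\gamma\leq 3$ would be handled separately, presumably by a finite analysis of the graphs irreducible for $S_1$, $S_2$, $S_3$ together with the bar-amalgamation reduction.)
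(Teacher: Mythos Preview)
Your proposal is correct and matches the paper's approach: the conjecture is refuted by showing that Milgram's graph $M_4$ (and the new graphs $M_5$, $M_6$) are upper-embeddable counterexamples to Duke's conjecture, hence also to Conjecture~\ref{nsw}, with the genus lower bounds coming from Milgram's scaffolding theorem and upper-embeddability from Xuong's deficiency criterion. The one refinement worth noting is that the paper does not rely on a ``mild connectivity hypothesis'' for upper-embeddability (no such clean result applies to cubic graphs here); instead Proposition~\ref{scaffold-upper} proves structurally that \emph{any} scaffolded graph $H(G,T_n)$ over an upper-embeddable base $G$ is itself upper-embeddable, by explicitly extending a Xuong tree of $G$ through each attached copy of $H_3$.
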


Both conjectures turn out to be incorrect. One source of counterexamples, due to Ungar (see Milgram and Ungar \cite{mu}), is to use cubic graphs of large girth. Nordhaus \cite{no} shows that the girth needs to be at least 12, and the smallest such graph, the Tutte 12-cage, has 126 vertices. One might ask whether or not there are smaller graphs, particularly those for which the obstructions to embedding are more ``local" than just girth considerations. Milgram \cite{mi} found such a counterexample, which we refer to as $M_4$. We describe $M_4$ in the following section, along with our generalizations $M_5$ and $M_6$.

Milgram's graph disproves Duke's conjecture for all $\gamma \geq 4$, but the bound it provides on the Betti number is weak. Our application of Milgram's construction gives sharper bounds for small genera. In the opposite direction, we show that Duke's conjecture is true for $\gamma < 4$. Our main result proves a new bound on the smallest possible Betti number, denoted $g(k)$ for a graph to have genus $k$.

\begin{thm}
For $k = 1,\dotsc,3$, $g(k)$ is $4$, $8$, and $12$, respectively. For $k = 4,\dotsc, 11$, $g(k)$ is no larger than the values in the following table:

$$\begin{array}{c||c|c|c|c|c|c|c|c|c|c}
k & 4 & 5 & 6 & 7 & 8 & 9 & 10 & 11 \\
\hline
g(k) & 15 & 18 & 21 & 25 & 29 & 33 & 36 & 39
\end{array}$$
\end{thm}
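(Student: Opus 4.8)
The plan is to handle the exact values for $k\le 3$ and the upper bounds for $4\le k\le 11$ by two separate arguments, with the genus computations for the two new graphs carrying the real weight.

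\textbf{The upper bounds.}
For every $k$ the bar-amalgamation of $k$ disjoint copies of $K_{3,3}$ has genus $k$ and Betti number $4k$, since both invariants are additive under bar-amalgamation and $\gamma(K_{3,3})=1$, $\beta(K_{3,3})=4$; hence $g(k)\le 4k$, which in particular gives the upper bound for $k=1,2,3$. For $k=4,5,6$ one does better with Milgram's graph $M_4$ and its generalizations $M_5,M_6$, which are cubic on $28$, $34$, and $40$ vertices and therefore have Betti numbers $15$, $18$, $21$; provided $\gamma(M_5)=5$ and $\gamma(M_6)=6$ (see below), this yields $g(5)\le 18$ and $g(6)\le 21$. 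For $7\le k\le 11$ I would bar-amalgamate one copy of $M_6$ with the cheapest pieces covering the remaining $k-6$ units of genus: one, two, or three copies of $K_{3,3}$ for $k=7,8,9$, one copy of $M_4$ for $k=10$, and one copy of $M_5$ for $k=11$. By additivity these have Betti numbers $25,29,33,36,39$, matching the table.

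\textbf{The genus of $M_5$ and $M_6$ (the crux).}
The inequalities $\gamma(M_5)\le 5$ and $\gamma(M_6)\le 6$ should be routine: one exhibits explicit rotation systems and reads off the face counts from the Edmonds--Heffter correspondence, and indeed $M_k$ is built with such an embedding in mind. The reverse inequalities are where Milgram's method enters, in the streamlined form developed earlier in the paper. The idea is that $M_k$ is assembled from a small number of rigid gadgets joined along a fixed pattern, and that in any $2$-cell embedding the way the global faces restrict to the individual gadgets is so constrained that the handle ``savings'' one might hope to gain by amalgamating the gadgets cannot all be realized, forcing $\gamma(M_k)\ge k$. I expect the face-tracking bookkeeping for this argument --- and in particular verifying that it still closes off for $k=5$ and $k=6$ (and seeing why the construction is not pushed further) --- to be the main obstacle of the whole theorem.

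\textbf{Duke's conjecture for $\gamma\le 3$.}
Finally I must show $\beta(G)\ge 4\gamma(G)$ whenever $\gamma(G)\le 3$; together with the upper bounds this gives $g(k)=4k$ for $k=1,2,3$. I would take a counterexample with $\beta(G)$ minimum and $\gamma(G)=k\in\{1,2,3\}$. The case $k=1$ is immediate from Kuratowski's theorem, since $\beta(K_{3,3})=4$ and $\beta(K_5)=6$. For $k\in\{2,3\}$, additivity of genus and Betti number over blocks, together with the known behaviour of genus under amalgamation along at most two vertices (where the genus drops by at most one below the sum while the Betti number rises by one), forces a minimal counterexample to be $3$-connected, hence of minimum degree at least $3$; combined with $\beta(G)\le 11$ this bounds $|V(G)|$ and $|E(G)|$, reducing the claim to a finite verification --- equivalently, that every graph of Betti number at most $7$ embeds in the torus and every graph of Betti number at most $11$ embeds in $S_2$. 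The remaining difficulty is to keep this case analysis small enough to be carried out by hand (or else to make it computer-assisted in a controlled way), noting that the full obstruction set for $S_2$ is not classified, so the $k=3$ case in particular must be dispatched through the connectivity reduction rather than a table lookup.
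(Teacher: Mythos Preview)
Your high-level plan matches the paper's exactly: the upper bounds come from $M_4$, $M_5$, $M_6$ and bar-amalgamations among these and with copies of $K_{3,3}$, and the exact values for $k\le 3$ come from verifying Duke's conjecture in that range. Two of your sub-arguments, however, diverge from what the paper does, and one of them has a real gap.

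For the lower bounds $\gamma(M_5)\ge 5$ and $\gamma(M_6)\ge 6$, the paper does not analyse embeddings of $M_k$ directly or do any ``face-tracking bookkeeping'' on gadgets. The key device is Milgram's scaffolding identity $\gamma(H(G,T_n))=\gamma_{T_n}(G)+n$, where $\gamma_{T_n}(G)$ is the minimum genus over the $3^n$ graphs obtained by replacing each attached copy of $H_3$ by a single chord joining two of its three attachment points. For $M_6=H(C_{15},T_5)$ this reduces everything to showing that every five-chord graph on $C_{15}$ arising this way is nonplanar, which is handled by a short case analysis that locates a $K_{3,3}$ (and in one residual case a $K_5$) minor. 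The same identity also gives the upper bound, so no explicit rotation system for $M_6$ is ever produced. What you flag as the main obstacle is therefore a tame Kuratowski-style planarity check, not an embedding analysis of a $40$-vertex graph.

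For $k\le 3$, the paper's reduction is not to $3$-connected graphs but to \emph{cubic} ones: splitting a vertex of degree greater than $3$ preserves $\beta$ and cannot decrease $\gamma$, so a minimal counterexample may be taken cubic, hence has at most $20$ vertices when $\beta\le 11$; one then restricts to triangle-free $2$-connected cubic graphs (irreducible cubic graphs are triangle-free) and checks all $97292$ of them by computer. Your $2$-cut reduction is the step that actually fails as written. With $\beta(G)=\beta(G_1)+\beta(G_2)+1$ you would need the upper bound $\gamma(G)\le\gamma(G_1)+\gamma(G_2)$ to push Duke's inequality from the pieces to $G$, not the lower bound ``genus drops by at most one below the sum''; and that upper bound is simply false for bare $2$-vertex separations (take two copies of the cube $Q_3$ glued at a pair of antipodal vertices: both pieces are planar, the amalgam is not). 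The cubic reduction sidesteps this entirely, and the paper makes no claim that $k=3$ can be done by hand; it is explicitly a machine verification.
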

\begin{proof}
The first statement follows from an exhaustive computation that demonstrates the correctness of Duke's conjecture for $\gamma \leq 3$ (Theorem \ref{duke3}). Theorem \ref{thm:T5G6} and its corollaries demonstrate the existence of graphs of genus 4, 5, and 6 with Betti numbers 15, 18, and 21, respectively. Bar-amalgamations of copies of these graphs and $K_{3,3}$ yield the remaining upper bounds.
\end{proof}

\bigskip %% JG added
\section{Milgram's construction}  %% Section 2 

Milgram's cubic graph $M_4$ has 28 vertices and genus 4, disproving both conjectures. We review his construction and use it to build a graph with 40 vertices and genus 6. The notation we use here is essentially the same as \cite{mi}, with some extra terminology. 

Let $H_3$ be the graph formed by taking $K_{2,3}$ and attaching a pendant vertex to each 2-valent vertex, as in Figure \ref{h3}. Each of the 1-valent vertices is called a \emph{free node}, and the edge incident with a free node is a \emph{free edge}. In Milgram's terminology the importance of $H_3$ is that it is ``non-outside," which is the following property:

\begin{figure}[ht]
\begin{tikzpicture}
\draw (0,1) circle (1);
\node (v1) at (0,0)[w]{};
\node (v2) at (-1,1)[v]{};
\node (v3) at (0,1)[v]{};
\node (v4) at (1,1)[v]{};
\node (v5) at (0,2)[w]{};
\node (w2) at (-1.5, .5)[b]{};
\node (w3) at (-.5, 1.5)[b]{};
\node (w4) at (1.5, 1.5)[b]{};
\draw (v1) to (v3) to (v5);
\draw (v2) to (w2);
\draw (v3) to (w3);
\draw (v4) to (w4);
\end{tikzpicture}
\caption{The graph $H_3$. The free nodes are depicted as solid vertices. }
\label{h3}
\end{figure}
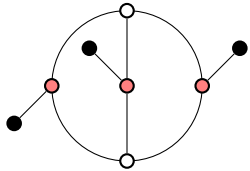

\begin{prop}
An embedding of the graph $H_3$ has a face whose boundary contains all three free nodes if and only if it is a nonplanar embedding.
\label{nonout}
\end{prop}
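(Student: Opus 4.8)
The plan is to split the biconditional and handle the two directions separately, using only the Euler formula together with the inequality $\Gamma(G)\le\lfloor\beta(G)/2\rfloor$ noted above. Observe first that $H_3$ has $8$ vertices and $9$ edges, so $\beta(H_3)=2$ and hence every (2-cell) embedding of $H_3$ is either planar or lies on $S_1$.

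For the forward implication I would prove the contrapositive: a planar embedding $\Pi$ of $H_3$ has no face whose boundary contains all three free nodes. Deleting the three pendant edges together with the free nodes turns $\Pi$ into a cellular planar embedding of $K_{2,3}$, and every face of $\Pi$ arises from a face of this $K_{2,3}$-embedding by adjoining some pendant edges as spikes. Since $K_{2,3}$ is $2$-connected, each of its planar faces is bounded by a cycle; and since $K_{2,3}$ is bipartite with parts of sizes $2$ and $3$, every such cycle has length $4$ and passes through exactly two of the three $2$-valent vertices $v_2,v_3,v_4$. A free node $w_i$ lies on the boundary of a face of $\Pi$ only if $v_i$ lies on the boundary of the $K_{2,3}$-face from which it arises; hence no face of $\Pi$ can contain all three of $w_2,w_3,w_4$.

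For the reverse implication, suppose $\Pi$ is nonplanar; by the initial remark it is a cellular embedding on $S_1$. Euler's formula then gives $8-9+F=2-2\cdot 1=0$, so $\Pi$ has exactly one face, whose boundary walk traverses every edge of $H_3$ and therefore visits every vertex, in particular all three free nodes. Thus the unique face of $\Pi$ has all three free nodes on its boundary, completing the proof.

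The only point requiring care is the forward direction: one should check that deleting the pendant edges preserves cellularity, that the faces of $\Pi$ are obtained from faces of the $K_{2,3}$-embedding as described, and that a free node appears on a face only when its attachment vertex does. These are routine once the reduction to $K_{2,3}$ is in place, and the remainder of the argument is forced by Euler's formula together with the bound $\Gamma(H_3)\le\lfloor\beta(H_3)/2\rfloor=1$.
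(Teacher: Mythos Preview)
Your argument is correct and close in spirit to the paper's. For the forward direction the paper simply cites that $K_{2,3}$ is non-outerplanar, whereas you spell out why: every planar face of $K_{2,3}$ is a $4$-cycle meeting only two of the degree-$2$ vertices, so no face can pick up all three pendants. For the reverse direction the paper asserts that the nonplanar embedding is essentially unique (a one-face torus embedding); you instead compute $F$ directly from Euler's formula using $\Gamma(H_3)\le\lfloor\beta(H_3)/2\rfloor=1$, which avoids the uniqueness claim and is a slightly cleaner route to the same conclusion.
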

\begin{proof}
The forward direction is the result of $K_{2,3}$ being non-outerplanar. The reverse direction follows from the fact that $H_3$ has an essentially unique non-planar embedding, which is on a torus, and which has only one face.
\end{proof}

The general idea of Milgram's construction is to attach copies of $H_3$ to a graph, matching each free node of $H_3$ with a vertex of that graph, to produce a new graph of larger genus. Each such attachment of a copy of $H_3$ increases the Betti number of the graph by 4. We present here shortened proofs of Milgram's results.

\begin{prop} \label{prop:scaffold}
Let $G$ be a graph that includes the three 2-valent vertices $u,v,w$, and let $G'$ be the graph obtained by a one-to-one matching of each of the free nodes of the graph $H_3$  to one of those three vertices of $G$.  Let $\Pi$ be an embedding of $G$ in a surface $S$. Then extending $\Pi$ to an embedding of $G'$ can be achieved by adding handles to $S$ as follows:
\begin{enumerate}
\item one handle necessary and sufficient if $u,v,w$ all lie on the boundary of a single face;
\item one handle necessary and sufficient if two of the vertices $u,v,w$ lie on the boundary of one face, but no face boundary contains all three;
\item two handles  necessary and sufficient if no two of the vertices $u,v,w$ lie on the boundary of the same face.
\end{enumerate}   
\end{prop}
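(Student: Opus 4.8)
The plan is to recast the question as one about embeddings of $H_3$ on its own, and then count handles via the Euler characteristic. The structural observation I would start from is that every embedding $\Pi'$ of $G'$ extending $\Pi$ arises as follows: for each of $u,v,w$ one inserts the incident free edge of $H_3$ into one of the two corners of $\Pi$ at that vertex, which designates a face of $\Pi$ whose boundary contains that vertex --- call these faces $\phi_u,\phi_v,\phi_w$, not necessarily distinct --- and then one draws the remainder of $H_3$ inside the region obtained by merging $\phi_u,\phi_v,\phi_w$ into a single connected bordered surface $B$ (by tubing together distinct faces and possibly adding handles inside them), with the free nodes lying on $\partial B$. If the faces $\phi_u,\phi_v,\phi_w$ span $t$ distinct faces of $\Pi$ and $B$ has genus $h$, then the genus of the embedding surface goes up by exactly $h+(t-1)$: replacing $t$ disjoint disks (total Euler characteristic $t$) by a genus-$h$ surface with $t$ boundary circles (Euler characteristic $2-2h-t$) changes the Euler characteristic by $2-2h-2t$, and handles added away from $H_3$ only increase the genus further, so in an optimal extension there are none. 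Pinning down this reduction cleanly is the step I expect to require the most care.

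For the lower bounds, fix any extension and let $t,h$ be as above. In case (3) no two of $u,v,w$ lie on a common face, so no matter how the corners are chosen the faces $\phi_u,\phi_v,\phi_w$ are pairwise distinct; hence $t=3$ and the genus increase is $h+2\ge 2$. In case (2), if $\phi_u,\phi_v,\phi_w$ all coincided that face would contain all three of $u,v,w$, which the hypothesis forbids, so $t\ge 2$ and the increase is $h+(t-1)\ge 1$. In case (1), an extension with $t\ge 2$ already has increase $\ge 1$, while one with $t=1$ has $B$ a genus-$h$ surface with a single hole; if $h=0$ then $B$ is a disk, forcing a planar embedding of $H_3$ with all three free nodes on one face, impossible by Proposition~\ref{nonout}, so $h\ge 1$ and again the increase is $\ge 1$.

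For the upper bounds I would exhibit an explicit extension in each case, realizing $B$ as a surface glued into $S$ in place of the relevant faces. In case (1), put all three corners on the common face $f$, take $t=1$, and let $B$ be a one-holed torus carrying $H_3$: the essentially unique nonplanar embedding of $H_3$ lies on a torus with a single face, which contains all three free nodes by Proposition~\ref{nonout}, so deleting the interior of that face puts $H_3$ in a one-holed torus with the free nodes on its boundary circle; gluing this in place of the disk $f$ (matching free nodes to $u,v,w$) adds one handle. In case (2), a face $f'$ containing $w$ is distinct from a face $f$ containing both $u$ and $v$ (else $f$ would contain all three), so take $B$ to be an annulus carrying a planar embedding of $H_3$ that has two free nodes on one face and the third on another; gluing its two circles to $\partial f$ and $\partial f'$ adds one handle. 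In case (3) the faces $\phi_u,\phi_v,\phi_w$ may be taken distinct, so take $B$ to be a three-holed sphere carrying a planar embedding of $H_3$ with the three free nodes on three distinct faces; gluing its three circles to $\partial\phi_u,\partial\phi_v,\partial\phi_w$ adds two handles. Both of the needed planar embeddings of $H_3$ come from the standard planar drawing of $K_{2,3}$, whose three quadrilateral faces pairwise share one of the three degree-two vertices, by attaching each pendant vertex inside a suitably chosen incident face.

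The remaining obstacle is checking, in case (1), that the cyclic order of the three free nodes along the single face of the toroidal $H_3$ can be matched to the cyclic order of $u,v,w$ along $\partial f$ when the one-holed torus is glued into $f$; this is settled by using the mirror-image embedding if necessary, since both mirror images are valid oriented one-holed tori that can be glued in. In cases (2) and (3) the corresponding matching is immediate: circles carrying at most two marked points can be glued respecting any bijection of those points, and in case (3) each boundary circle of $B$ carries exactly one free node to be matched with exactly one of $u,v,w$.
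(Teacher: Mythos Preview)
Your proof is correct and takes a genuinely different route from the paper's. The paper argues each case by an explicit local construction: in Case~(1) it draws $H_3$ inside the common face with two free nodes on the outer boundary and routes the third free edge over a single added handle; in Cases~(2) and~(3) it draws a 4-edge path of $H_3$ between two of the target vertices (using a first handle in Case~(3) to bring their faces together) and then routes the remaining free edge over one more handle. Its lower bounds are brief --- Proposition~\ref{nonout} for Case~(1), and for Case~(3) the informal observation that a path from $u$ to $v$ already needs one handle and reaching $w$ needs another --- with the Case~(2) lower bound left implicit.

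Your approach is more global and uniform: you recast every extension as an embedding of $H_3$ in a bordered surface $B$ glued into $S$ in place of $t$ face-disks, and read off the genus increment $h+(t-1)$ from Euler characteristic. This makes the lower bounds in all three cases equally rigorous and transparent, and your upper-bound constructions --- a one-holed torus, an annulus, a pair of pants, each carrying a suitable embedding of $H_3$ with free nodes on the boundary --- are the surgery-theoretic counterparts of the paper's handle-by-handle drawings. The paper's version is shorter and more pictorial; yours gives a cleaner accounting of exactly where each handle comes from and would generalize more readily to other gadgets in place of $H_3$. The one place where your write-up is looser is the opening reduction (``every extension arises as follows''), which you flag yourself: you should also note that the connected region $B'$ actually containing $H_3$ may absorb faces beyond $\phi_u,\phi_v,\phi_w$, but this only increases $t$ in your formula and hence the lower bound, so the argument goes through unchanged.
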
 

\begin{proof}  \vskip-6pt
Case (1).  By Proposition \ref{nonout}, at least one handle is needed.  To see that one is sufficient, draw $H_3$ in that single face, so that two free nodes are in the exterior, as in Figure \ref{h3}, and match those two free nodes with vertices $u$ and $v$. Then run a handle from the interior region of $H_3$ that contains the other free node to a location near vertex $w$. Finally, re-route the free edge in that interior region to vertex $w$.

Case (2).  Suppose that $u$ and $v$ lie on the boundary of the same face. First match two free nodes of $H_3$ to $u$ and $v$. Next choose a 4-edge path in $H_3$ joining those two free nodes and draw it in that face, and extend this drawing to a drawing of $H_3$.  Then run a handle from whatever face of the drawing contains the other free node and free edge to a location near vertex $w$, and re-route the free edge over that new handle to vertex $w$, as shown in Figure \ref{handle}. 

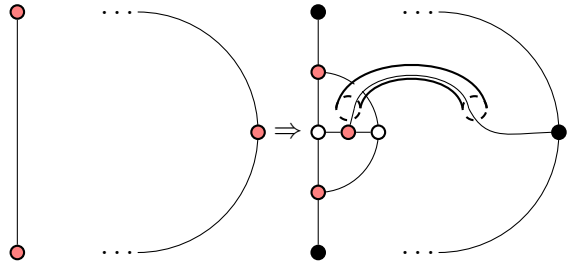
\begin{figure}[ht]
\begin{tikzpicture}[scale=0.8]
\draw (1,-4) arc(-90:90:2);
\node (v1) at (-1, 0)[v]{};
\node (v2) at (-1, -4)[v]{};
\node at (3, -2)[v]{};
\node at (0.7, 0)[]{$\dots$};
\node at (0.7, -4)[]{$\dots$};
\draw (v1) to (v2);
\node at (3.5, -2)[]{$\Rightarrow$};
\begin{scope}[xshift=5cm]
\draw (1,-4) arc(-90:90:2);
\node (v1) at (-1, 0)[b]{};
\node (v2) at (-1, -4)[b]{};
\draw (v1) to (v2);
\draw (-1, -3) arc(-90:43:1);
\draw (-1, -1) arc(90:53:1);
\node (v3) at (-1, -1)[v]{};
\node (v4) at (-1, -3)[v]{};
\node (v5) at (-1, -2)[w]{};
\node (v6) at (0, -2)[w]{};
\node (v7) at (-0.5, -2)[v]{};
\draw (v5) to (v7) to (v6);
\draw (v7) to (-.4, -1.6);
\draw [bend left=80] (-.4, -1.6) to (1.5, -1.6);
\draw (1.5, -1.6) .. controls (1.8, -2.2) and (2.2, -2) .. (3, -2); 
\draw (-.5, -1.6)[dashed, thick] circle (0.2);
\draw (1.6, -1.6)[dashed, thick] circle (0.2);
\draw [bend left=80, thick] (-.7, -1.6) to (1.8, -1.6);
\draw [bend left=80, thick] (-.3, -1.6) to (1.4, -1.6);
\node at (3, -2)[b]{};
\node at (0.7, 0)[]{$\dots$};
\node at (0.7, -4)[]{$\dots$};
\end{scope}
\end{tikzpicture}
\caption{Extending an edge to a drawing of $H_3$ with one extra handle.}
\label{handle}
\end{figure}

Case (3).  With no two of the vertices $u,v,w$ on the same face boudary, it is clear that at least two handles will be needed. That is, in order to draw a path from $u$ to $v$, we need one extra handle, and to connect that path with $w$, another handle is required. The construction of Case (2) is modified here, so that the 4-edge path in $H_3$ joining two free nodes is routed across a first new handle that has one end at a location near vertex $u$ and the other end near vertex $v$.  The rest of $H_3$ is drawn on that handle.  Then a second handle is installed with one end at a location near the remaining free node and the other neat vertex $w$. This case is also illustrated by Figure \ref{handle}.

\end{proof}

Let $G$ be a graph with at least $3n$ 2-valent vertices, and let $$T_n = \{t_0, t_2, \dotsc, t_{n-1}\}$$ be a set of unordered triples of distinct 2-valent vertices of $G$. The \emph{scaffolded graph} $H(G, T_n)$ is the graph formed by taking $n$ copies of $H_3$ and identifying the free nodes of the $i$th copy with the triple $t_i$. 

\begin{example} 
Let $C_n$ be the cycle graph on $n$ vertices. As we might expect, the scaffolded graph $H(C_3, V(C_3))$ has genus $\gamma(C_3) + 1 = 1$, as per Case (1) of Proposition \ref{prop:scaffold}.
\end{example}

\begin{example}
However, consider the wheel graph $W_5$ with three subdivided edges as on the left in Figure \ref{w5}.  Even though $W_5$ is planar, no two of the three subdivision vertices lie on a single face boundary.  As per Case (3) of Proposition \ref{prop:scaffold}, the scaffolded graph on the right has genus 2.
\end{example}

\begin{figure}[ht]
\begin{tikzpicture}
\node (c) at (0,0)[v]{};
\foreach \i in {0,...,4}{
	\node (v\i) at ({cos(72*\i)}, {sin(72*\i)})[v]{};
	\draw (c) to (v\i);
}
\draw (v0) to (v1) to (v2) to (v3) to (v4) to (v0);
\node (a) at (0.5,0)[b, label=below:$b$]{};
\node (b) at ({cos(144)/2},{sin(144)/2})[b, label=below:$a$]{};
\node (c) at (-.25, -.77)[b, label=below:$c$]{};
\end{tikzpicture}
\hspace{15pt}
\begin{tikzpicture}
\begin{scope}[xshift=-3cm]
\draw (0,0) circle (1);
\node (w1) at (0,-1)[v]{};
\node (w2) at (-1,0)[w]{};
\node (w3) at (0,0)[v]{};
\node (w4) at (1,0)[w]{};
\node (w5) at (0,1)[v]{};
\draw (w2) to (w3) to (w4);
\end{scope}
\node (c) at (0,0)[v]{};
\foreach \i in {0,...,4}{
	\node (v\i) at ({cos(72*\i)}, {sin(72*\i)})[v]{};
	\draw (c) to (v\i);
}
\draw (v0) to (v1) to (v2) to (v3) to (v4) to (v0);
\node (a) at (0.5,0)[b]{};
\node (b) at ({cos(144)/2},{sin(144)/2})[b]{};
\node (c) at (-.25, -.77)[b]{};
\draw [bend right=20](w1) to (c);
\draw [bend right=15](w3) to (a);
\draw [bend right=15](w5) to (b);
\end{tikzpicture}
\caption{The wheel $W_5$, and the scaffolded graph $H(W_5, \{a, b, c\})$. The genus of the graph on the right is 2.}
\label{w5}
\end{figure}
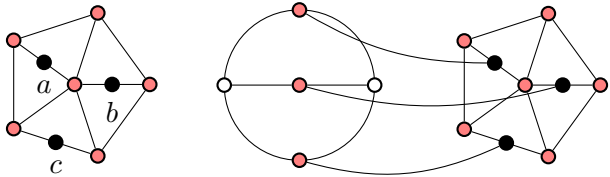

Denote by $S(G, T_n)$ the set of $3^n$ graphs formed by adding an edge between two points of each triple in $T_n$.  We define 
$$\gamma_{T_n}(G) ~= \min_{X \in S(G, T_n)} \gamma(X).$$ 
Milgram gives a general theorem on the genus of a scaffolded graph. Before handling the full generality of Milgram's result, we first consider the case where $n = 1$.

\begin{lem}
$\gamma(H(G, T_1)) = \gamma_{T_1}(G) + 1$.
\label{one}
\end{lem}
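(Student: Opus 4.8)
The plan is to express each side of the claimed identity as a single minimum, taken over all embeddings $\Pi_G$ of $G$, of $\mathrm{genus}(\Pi_G)$ plus a small ``attachment cost,'' and then to observe that the two costs differ by exactly $1$ for every $\Pi_G$.

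First I would record the elementary fact behind the right-hand side. For any two vertices $p,q$ of $G$ and any embedding $\Pi_G$, the edge $pq$ can be added inside a common face, with no new handle, when $p$ and $q$ are cofacial in $\Pi_G$; otherwise it can be routed over a single new handle joining a face at $p$ to a face at $q$. Conversely, deleting $pq$ from a minimum-genus embedding of $G+pq$ either leaves the genus unchanged -- in which case $pq$ bordered two distinct faces, which merge into one face containing both $p$ and $q$ -- or lowers the genus by exactly one. Together these give
$$\gamma(G+pq)\;=\;\min_{\Pi_G}\bigl(\mathrm{genus}(\Pi_G)+\varepsilon_{p,q}(\Pi_G)\bigr),$$
where $\varepsilon_{p,q}(\Pi_G)\in\{0,1\}$ vanishes precisely when $p$ and $q$ are cofacial in $\Pi_G$. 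Minimizing also over the three pairs inside $\{u,v,w\}$ yields
$$\gamma_{T_1}(G)\;=\;\min_{\Pi_G}\bigl(\mathrm{genus}(\Pi_G)+\varepsilon(\Pi_G)\bigr),$$
with $\varepsilon(\Pi_G)=0$ iff some two of $u,v,w$ lie on a common face of $\Pi_G$.

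Next I would restate Proposition \ref{prop:scaffold} in the same language. Write $h(\Pi_G)=1$ when some two of $u,v,w$ are cofacial in $\Pi_G$ (covering cases (1) and (2)) and $h(\Pi_G)=2$ otherwise (case (3)). The ``sufficient'' halves of its three cases furnish, for each $\Pi_G$, an extension to $H(G,T_1)$ adding exactly $h(\Pi_G)$ handles, while the ``necessary'' halves say that every extension adds at least that many. Since every embedding of $H(G,T_1)$ is an extension of its own restriction to $G$, this gives $\gamma(H(G,T_1))=\min_{\Pi_G}\bigl(\mathrm{genus}(\Pi_G)+h(\Pi_G)\bigr)$. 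As $h(\Pi_G)=\varepsilon(\Pi_G)+1$ identically -- both quantities are governed solely by whether two of $u,v,w$ share a face of $\Pi_G$ -- the two minima differ by exactly $1$, proving the lemma. I would also remark that no connectivity hypothesis on $G$ is used: both the edge-addition cost and Proposition \ref{prop:scaffold} apply verbatim when $G$ is disconnected, and two vertices in distinct components may still be cofacial.

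The only genuine subtlety -- and the one point where the ``non-outside'' property of $H_3$ does real work -- is buried in the appeal to Proposition \ref{prop:scaffold}: I must use that its lower bounds ($1$ handle in cases (1)--(2), $2$ in case (3)) hold for \emph{every} extension of a given embedding of $G$, not merely for the particular extensions built in its proof. That is exactly what ``necessary and sufficient'' asserts there, and granting it the rest is bookkeeping; so I expect essentially all the difficulty of this lemma to have already been discharged by Propositions \ref{nonout} and \ref{prop:scaffold}.
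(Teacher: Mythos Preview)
Your argument is correct and, if anything, more complete than the paper's. Both proofs rest entirely on Proposition~\ref{prop:scaffold}, but the organization differs. The paper does a two-case split on whether $\gamma_{T_1}(G)$ equals $\gamma(G)$ or $\gamma(G)+1$: in the first case it extracts the lower bound from the fact that every case of Proposition~\ref{prop:scaffold} requires at least one handle, and in the second it builds the upper bound by replacing the extra chord in a minimum-genus embedding of $X\in S(G,T_1)$ with a copy of $H_3$ via the one-handle construction of Figure~\ref{handle}. The remaining two inequalities are left essentially implicit.

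Your route---writing both $\gamma_{T_1}(G)$ and $\gamma(H(G,T_1))$ as $\min_{\Pi_G}\bigl(\mathrm{genus}(\Pi_G)+\text{cost}\bigr)$ and checking that the two cost functions differ by exactly one for every $\Pi_G$---makes both directions explicit at once and avoids the case split entirely. It also makes transparent the one genuine subtlety, which you flag correctly: the lower bound needs the ``necessary'' half of Proposition~\ref{prop:scaffold} to apply to \emph{every} extension of $\Pi_G$, not just the ones constructed there, together with the fact that any embedding of $H(G,T_1)$ is such an extension of its own restriction to $G$. The paper's Case~2 lower bound (that $\gamma(H(G,T_1))\ge\gamma(G)+2$ when no two of $u,v,w$ are cofacial in any minimum-genus embedding of $G$) actually requires exactly your observation that $h(\Pi_G)=\varepsilon(\Pi_G)+1$ for embeddings $\Pi_G$ of \emph{all} genera, not just genus $\gamma(G)$; your formulation handles this automatically, whereas the paper's sketch leaves it to the reader.
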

\begin{proof}
Suppose first that $\gamma(G) = \gamma_{T_1}(G)$.  Then by Proposition \ref{prop:scaffold}, we have 
$$\gamma(H(G, T_1)) ~\geq~ \gamma(G) + 1 ~=~ \gamma_{T_1}(G) + 1$$
Since an edge can be added to any graph embedding using at most one extra handle, the alternative possibility is that $\gamma_{T_1}(G) = \gamma(G) + 1$. In that case, we replace the edge joining two vertices of the triple $t_0$ by a 4-edge path in $H_3$ joining two free nodes, and we continue as in Case (2) of Proposition \ref{prop:scaffold}.
\end{proof}

The general case is stated similarly and follows from an induction on the number of copies of $H_3$.

\begin{thm}[Milgram \cite{mi}]
$$\gamma(H(G, T_n)) ~=~ \gamma_{T_n}(G) + n.$$
\label{mainthm}
\end{thm}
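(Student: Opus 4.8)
The plan is to prove Theorem~\ref{mainthm} by induction on $n$, using Lemma~\ref{one} as the base case $n=1$. The inductive step will peel off one copy of $H_3$ at a time, but the subtlety is that the quantity $\gamma_{T_n}(G)$ is defined as a minimum over all $3^n$ ways of choosing a chord for each triple, so the induction has to be set up carefully so that the ``scaffold graph'' and the ``chorded graph'' interact correctly.

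\medskip

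\noindent\textbf{Inductive step, upper bound.} Assume the result for $n-1$. Pick $X \in S(G, T_n)$ achieving $\gamma_{T_n}(G)$; write $X = X' + e_{n-1}$ where $e_{n-1}$ is the chord on the last triple $t_{n-1}$ and $X' \in S(G, T_{n-1})$. Then $\gamma(X') \ge \gamma_{T_{n-1}}(G)$, and $\gamma(X') \le \gamma(X) + 1$. I would like to apply the induction hypothesis to a graph that already has $n-1$ copies of $H_3$ attached, so I set $G_1 = H(G, T_{n-1})$ and observe $H(G, T_n) = H(G_1, \{t_{n-1}\})$, so by Lemma~\ref{one}, $\gamma(H(G,T_n)) = \gamma_{\{t_{n-1}\}}(G_1) + 1$. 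Now $\gamma_{\{t_{n-1}\}}(G_1)$ is $\min(\gamma(G_1),\gamma(G_1 + e_{n-1}))$, and by the induction hypothesis applied to both $G$ and $G+e_{n-1}$ (noting $G + e_{n-1}$ has $H_3$'s attached along $T_{n-1}$ to give $H(G+e_{n-1}, T_{n-1}) = G_1 + e_{n-1}$), we get $\gamma(G_1) = \gamma_{T_{n-1}}(G) + (n-1)$ and $\gamma(G_1 + e_{n-1}) = \gamma_{T_{n-1}}(G + e_{n-1}) + (n-1)$. Combining, $\gamma(H(G,T_n)) = \min\bigl(\gamma_{T_{n-1}}(G),\, \gamma_{T_{n-1}}(G+e_{n-1})\bigr) + (n-1) + 1$, and the inner minimum is exactly $\gamma_{T_n}(G)$ since minimizing over all chord-choices for the first $n-1$ triples and then over the two choices for $t_{n-1}$ is the same as minimizing over all of $T_n$ at once.

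\medskip

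\noindent\textbf{The main obstacle.} The delicate point is the last sentence of the previous paragraph: I must check that $\min_{e_{n-1}}\bigl(\min_{Y \in S(G, T_{n-1})} \gamma(Y + e_{n-1})\bigr) = \min_{X \in S(G,T_n)} \gamma(X)$, i.e., that the recursion unpacks to the flat minimum. This is essentially the associativity of nested minima and is routine, but it does depend on the fact that the edge $e_{n-1}$ for triple $t_{n-1}$ and the scaffolding/chording for the other triples are on disjoint sets of vertices (the triples in $T_n$ are pairwise disjoint 2-valent vertices, and each copy of $H_3$ only touches its own triple), so the choices genuinely decouple. I would state this decoupling explicitly as the one thing that needs verification. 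A secondary caveat: Lemma~\ref{one} is stated for a graph $G$ with a triple of 2-valent vertices, and I should confirm that after attaching $n-1$ copies of $H_3$, the vertices of $t_{n-1}$ are still 2-valent in $G_1 = H(G, T_{n-1})$ — which holds because $t_{n-1}$ is disjoint from $t_0, \dotsc, t_{n-2}$, so no copy of $H_3$ was attached there.

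\medskip

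\noindent Putting these together gives $\gamma(H(G,T_n)) = \gamma_{T_n}(G) + n$, completing the induction. I expect the write-up to be short: the base case is Lemma~\ref{one}, the inductive step is two applications of the induction hypothesis plus Lemma~\ref{one}, and the only genuine content is the bookkeeping that the minimum over $S(G,T_n)$ factors through the minimum over $S(G, T_{n-1})$ and the two choices on the remaining triple.
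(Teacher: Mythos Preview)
Your inductive scheme via Lemma~\ref{one} is exactly the paper's approach, and the identity you isolate in the ``main obstacle'' paragraph, $\min_{e_{n-1}}\min_{Y\in S(G,T_{n-1})}\gamma(Y+e_{n-1})=\gamma_{T_n}(G)$, is the right bookkeeping. The paper differs only cosmetically: it does the upper bound by a direct construction (extend a minimum-genus embedding of the optimal $X\in S(G,T_n)$ by $n$ handles, one per chord, as in Figure~\ref{handle}) and reserves the iterated use of Lemma~\ref{one} for the lower bound.

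That said, your middle paragraph is garbled and, as written, is wrong. The sentence ``$\gamma_{\{t_{n-1}\}}(G_1)$ is $\min(\gamma(G_1),\gamma(G_1+e_{n-1}))$'' is false: by definition $\gamma_{\{t_{n-1}\}}(G_1)=\min_e\gamma(G_1+e)$ over the \emph{three} chords on $t_{n-1}$, and the term $\gamma(G_1)$ (no chord) does not belong there at all---if it did, the formula could undershoot $\gamma_{T_n}(G)$ whenever $\gamma_{T_{n-1}}(G)<\gamma_{T_n}(G)$. Likewise ``the two choices for $t_{n-1}$'' should be three, and the induction hypothesis should be applied to each $G+e$ (for the three chords $e$), not to $G$ itself. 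Once you make these corrections your argument is complete and coincides with the paper's.
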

\begin{proof}
Let $G'$ be a graph in the set $S(G, T_n)$ with the smallest genus. For each added edge $e \in E(G')\setminus E(G)$, we can extend any minimum genus embedding of $G'$ by a drawing of $H_3$ using one extra handle, as described in Figure \ref{handle}, thereby demonstrating that the genus of $H(G, T_n)$ is at most $\gamma(G') + n \,=\, \gamma_{T_n}(G) + n$. 

For the lower bound, consider some minimum genus embedding of the scaffolded graph $H(G, T_n)$. Lemma \ref{one} guarantees that for each triple, we can replace $H_3$ by a single edge between two vertices of the triple and thereby reduce the genus by 1. Replacing each of the $n$ copies of $H_3$ gives us an embedding of a graph $X\in S(G, T_n)$, whose genus is bounded below by $\gamma_{T_n}(G)$. Thus, $\gamma(H(G,T_n)) \geq \gamma(S) + n \geq \gamma_{T_n}(G) + n$.
\end{proof}

Milgram applied his scaffolding construction to a graph we will refer to as $M_4 = H(G_4, T_3)$, where the graph $G_4$ is obtained from the cycle graph $C_9$ by adding two overlapping chords, and where $T_3$ is as shown in Figure \ref{m4}. Milgram demonstrated that $\gamma_{T_3}(G) = 1$, so $\gamma(M_4) = 4$. We prove that same result in the following section as a special case of another genus calculation. Since $M_4$ has 28 vertices and $\beta(M_4) = 15$, it follows that $M_4$ disproves Duke's conjecture.

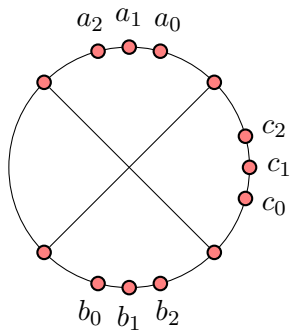
\begin{figure}[ht]
\begin{tikzpicture}[scale=0.8]
\draw (0,0) circle (2);
\foreach \i in {0,...,2}{
  \node (a\i) at ({2*cos(75+15*\i)},{2*sin(75+15*\i)})[v]{};
  \node (b\i) at ({2*cos(-15+15*\i)},{2*sin(-15+15*\i)})[v]{};
  \node (c\i) at ({2*cos(-105+15*\i)},{2*sin(-105+15*\i)})[v]{};
  \node at ({2.5*cos(75+15*\i)},{2.5*sin(75+15*\i)})[]{$a_{\i}$};
  \node at ({2.5*cos(-15+15*\i)},{2.5*sin(-15+15*\i)})[]{$c_{\i}$};
  \node at ({2.5*cos(-105+15*\i)},{2.5*sin(-105+15*\i)})[]{$b_{\i}$};
}
\foreach \i in {0,...,3}
	\node (d\i) at ({2*cos(45+90*\i)}, {2*sin(45+90*\i)})[v]{};
\draw (d0) to (d2);
\draw (d1) to (d3);
\end{tikzpicture}
\caption{A graph whose scaffolded graph is $M_4$. The triples are of the form $t_i = \{a_i, b_i, c_i\}$.}
\label{m4}
\end{figure}

\bigskip
\section{A cubic graph on 40 vertices with genus 6}  %% section 3 

Label the vertices of the cycle graph $C_{15}$ as 
$$a_0, a_1, \dotsc, a_4,\, b_0, \dotsc, b_4,\, c_0, \dotsc, c_4$$ 
in counter-clockwise order. To make our notation consistent with what we previously used, we refer to this graph as $G_6$. Now let $M_6$ be the scaffolded graph $H(G_6, T_5)$, where $T_5$ are the triples of the form $t_i = \{a_i, b_i, c_i\}$, for $i = 0, \dotsc, 4$. To compute the genus of $M_6$, we need to consider the genus of all $3^5$ graphs in $S(G_6, T_5)$. 

\begin{thm}  \label{thm:T5G6}  %%JG-label
$\gamma_{T_5}(G_6) = 1$.
\end{thm}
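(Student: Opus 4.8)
The plan is to bypass the brute-force enumeration of the $3^5$ graphs in $S(G_6,T_5)$ and instead work with the classical characterization of planarity for a graph that is a cycle plus chords. Place the vertices of $G_6 = C_{15}$ at positions $0,1,\dots,14$ around the cycle, so that $a_i$ sits at position $i$, $b_i$ at position $5+i$, and $c_i$ at position $10+i$. Any $X \in S(G_6,T_5)$ is then $C_{15}$ together with five chords, one per triple, each chord joining two of $\{a_i,b_i,c_i\}$; crucially, each such chord subtends an arc of exactly $5$ of the $15$ positions. Since $C_{15}$ is a spanning cycle of $X$, a standard fact gives that $X$ is planar if and only if the interlacement (conflict) graph $I(X)$ of its five chords --- two chords adjacent precisely when their endpoints alternate around $C_{15}$ --- is bipartite. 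So it suffices to show $I(X)$ is non-bipartite for every choice (giving $\gamma_{T_5}(G_6) \ge 1$) together with a single toroidal $X$ (giving $\gamma_{T_5}(G_6)\le 1$).

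First I would record the interlacement relations by a short case check on arcs. Writing $\alpha_i,\beta_i,\gamma_i$ for the chords $a_ib_i$, $b_ic_i$, $c_ia_i$: because every chord spans exactly $5$ positions while the indices lie in $\{0,\dots,4\}$ (so they differ by at most $4<5$), any two chords of the \emph{same} type always interlace; and for different types, $\alpha_i$ and $\beta_j$ interlace iff $i>j$, $\alpha_i$ and $\gamma_j$ interlace iff $i<j$, and $\beta_i$ and $\gamma_j$ interlace iff $i>j$. (These three rules for distinct types are cyclic images of one another under the order-$3$ rotation $a_i\mapsto b_i\mapsto c_i$ of $C_{15}$, which is a handy consistency check.)

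For the lower bound, suppose $I(X)$ were bipartite, with parts $V_1,V_2$. Each part is an independent set, and since two chords of the same type are always adjacent, each part contains at most one chord of each type, hence has size at most $3$; as there are five chords in all, $\{|V_1|,|V_2|\}=\{3,2\}$. The part of size $3$ therefore consists of one chord of each type, say $\alpha_p,\beta_q,\gamma_r$ with $p,q,r$ distinct. Non-adjacency of $\alpha_p$ and $\beta_q$ forces $p<q$; of $\beta_q$ and $\gamma_r$, forces $q<r$; of $\gamma_r$ and $\alpha_p$, forces $r<p$. The chain $p<q<r<p$ is impossible, so $I(X)$ is never bipartite and every $X\in S(G_6,T_5)$ is non-planar, i.e.\ $\gamma_{T_5}(G_6)\ge 1$.

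For the upper bound, take $X_0$ to be the graph obtained by adding $a_ib_i$ for every $i$. Then $c_0,\dots,c_4$ are exactly the $2$-valent vertices of $X_0$, and they form a path $b_4 - c_0 - c_1 - \cdots - c_4 - a_0$; suppressing that path shows $X_0$ is homeomorphic to the $10$-cycle $a_0a_1\cdots a_4 b_0 b_1\cdots b_4$ together with the five chords $a_ib_i$, which are precisely its five ``main diagonals'' --- that is, $X_0$ is homeomorphic to the M\"obius ladder on ten vertices, which is toroidal. Hence $\gamma(X_0)\le 1$, and with the lower bound this gives $\gamma(X_0)=1$ and $\gamma_{T_5}(G_6)=1$. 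The main obstacle will be the arc case analysis establishing the interlacement rules precisely (getting every ``$i>j$'' versus ``$i<j$'' right) and the bookkeeping in the $3$-versus-$2$ split; the M\"obius-ladder identification is a convenient shortcut that avoids writing down an explicit toroidal rotation system.
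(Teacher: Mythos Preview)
Your proof is correct and takes a genuinely different route from the paper's.  The paper establishes $\gamma_{T_5}(G_6)\ge 1$ by a direct case analysis: it observes that three chords of the same type always ``overlap'' to produce a $K_{3,3}$ subdivision, then reduces (by the $\mathbb{Z}/3$-symmetry) to the case of two $ab$-chords, two $bc$-chords, and one $ac$-chord, and finishes by checking roughly ten sub-cases pictorially, with one residual case requiring a $K_5$-minor.  For the upper bound the paper exhibits a \emph{planar} four-chord extension of $C_{15}$, so that any fifth chord gives genus at most~$1$.

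You instead invoke the classical criterion that a Hamiltonian graph is planar iff the interlacement graph of its chords is bipartite, reduce the lower bound to showing that no independent set in $I(X)$ has size~$3$, and dispatch this with a clean three-line cyclic contradiction $p<q<r<p$.  Your upper bound picks a specific five-chord graph and recognises it (after suppressing the degree-$2$ $c$-vertices) as the M\"obius ladder on ten vertices, which is toroidal.  Your argument is shorter and more uniform: it avoids all pictures, never needs the stray $K_5$-minor case, and makes the role of the $\mathbb{Z}/3$-rotation transparent through the cyclic interlacement rules.  The paper's approach has the minor advantage that its planar four-chord configuration simultaneously verifies $\gamma_{T_4}(G_5)=1$ and $\gamma_{T_3}(G_4)=1$ (the $M_5$ and $M_4$ corollaries) without further work, whereas your upper bound is specific to the five-triple setting; but this is easily patched.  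One small point worth making explicit in a write-up: the indices $p,q,r$ are automatically distinct because each triple $t_i$ contributes exactly one chord, so $\alpha_p$ and $\beta_p$ can never both appear in $X$.
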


\begin{proof}
Let $X$ be a graph in  $S(G_6, T_5)$ and consider the added \emph{chords} $E(X)-E(G_6)$.  We call a chord of the form $x_iy_i$ an \emph{$xy$-chord}, where $x,y \in \{a, b, c\}$. To prove that $\gamma_{T_5}(G_6) \le 1$, we observe (perhaps with the aid of a drawing)  that the extension of the cycle graph $G_6$ by the chords 
$$a_0b_0, \quad a_1b_1, \quad b_2c_2, \quad b_3c_3$$ 
is planar.  Thus, the result of adding an additional chord $x_4y_4$ has genus at most 1. Our remaining task is to show that no (allowable) choice of five chords for the graph $X$ is planar.

Since the graph $X$ is cubic, it is perhaps easiest to look for $K_{3,3}$ subgraphs. One visualization of $K_{3,3}$ consists of a circle with three pairs of antipodal points each connected by an edge. We use this picture to prove the nonplanarity of most graphs in $S(G_6,T_5)$. Three chords are said to \emph{overlap} if adding those chords to the cycle $G_6$ forms a $K_{3,3}$.  Note that if $G_6$ is drawn as a circle and the chords are drawn as line segments inside the circle, then the three chords pairwise intersect.

If the graph $X$ has three $ab$-chords, three $bc$-chords, or three $ac$-chords, then those three edges overlap. Otherwise, we may assume that $X$ has two $ab$-chords, two $bc$-chords, and one $ac$-chord. The rotationally-symmetric labeling allows us to make this assumption without loss of generality.

Even without the $ac$-chord, the graph can be nonplanar. Let us momentarily ignore the $ac$-chord and consider only four triples $t_0,\dotsc, t_3$, where the only permissible chords are $ab$- and $bc$-chords. Then the chord $a_3b_3$ overlaps with any two $bc$-chords, and similarly, the chord $b_0c_0$ overlaps with any two $ac$-chords. We are left with two possibilities for chords in the restricted setting:

\begin{enumerate}
\item $a_0b_0$, $a_1b_1$, $b_2c_2$, $b_3c_3$.  
\item $a_0b_0$, $b_1c_1$, $a_2b_2$, $b_3c_3$.
\end{enumerate}

When adding back the fifth triple, there are five possible arrangements depending on the subscript of the $ac$-chord. In Case (1), one can check that any of those choices for the $ac$-chord will overlap with either the two $bc$-chords or the two $ab$-chords as in Figure \ref{case1}. 

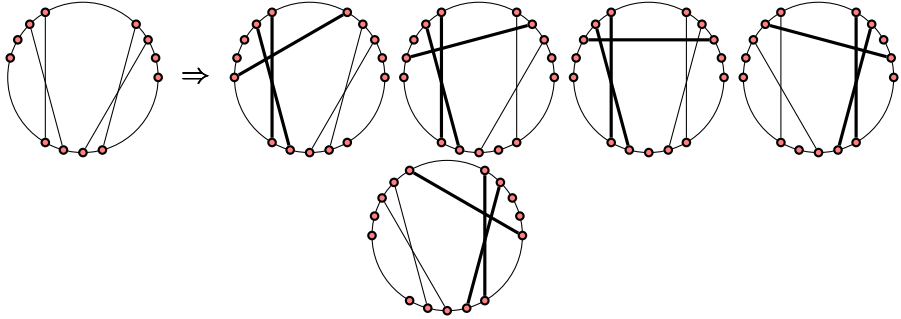
\begin{figure}[ht]
\begin{tikzpicture}
\draw (0,0) circle (1);
\foreach \i in {0,...,3}{
  \node (a\i) at ({cos(120+15*\i)},{sin(120+15*\i)})[r]{};
  \node (b\i) at ({cos(240+15*\i)},{sin(240+15*\i)})[r]{};
  \node (c\i) at ({cos(360+15*\i)},{sin(360+15*\i)})[r]{};
  %\node at ({2.5*cos(120+15*\i)},{2.5*sin(120+15*\i)})[]{$a_{\i}$};
  %\node at ({2.5*cos(240+15*\i)},{2.5*sin(240+15*\i)})[]{$b_{\i}$};
  %\node at ({2.5*cos(360+15*\i)},{2.5*sin(360+15*\i)})[]{$c_{\i}$};
  \node at (1.5, 0)[]{$\Rightarrow$};
}
\draw (a0) to (b0);
\draw (a1) to (b1);
\draw (b2) to (c2);
\draw (b3) to (c3);
\end{tikzpicture}
\begin{tikzpicture}
\draw (0,0) circle (1);
\foreach \i in {0,...,4}{
  \node (a\i) at ({cos(120+15*\i)},{sin(120+15*\i)})[r]{};
  \node (b\i) at ({cos(240+15*\i)},{sin(240+15*\i)})[r]{};
  \node (c\i) at ({cos(360+15*\i)},{sin(360+15*\i)})[r]{};
  %\node at ({2.5*cos(120+15*\i)},{2.5*sin(120+15*\i)})[]{$a_{\i}$};
  %\node at ({2.5*cos(240+15*\i)},{2.5*sin(240+15*\i)})[]{$b_{\i}$};
  %\node at ({2.5*cos(360+15*\i)},{2.5*sin(360+15*\i)})[]{$c_{\i}$};
}
\draw [very thick] (a0) to (b0);
\draw [very thick] (a1) to (b1);
\draw (b2) to (c2);
\draw (b3) to (c3);
\draw [very thick] (c4) to (a4);
\end{tikzpicture}
\begin{tikzpicture}
\draw (0,0) circle (1);
\foreach \i in {0,...,4}{
  \node (a\i) at ({cos(120+15*\i)},{sin(120+15*\i)})[r]{};
  \node (b\i) at ({cos(240+15*\i)},{sin(240+15*\i)})[r]{};
  \node (c\i) at ({cos(360+15*\i)},{sin(360+15*\i)})[r]{};
  %\node at ({2.5*cos(120+15*\i)},{2.5*sin(120+15*\i)})[]{$a_{\i}$};
  %\node at ({2.5*cos(240+15*\i)},{2.5*sin(240+15*\i)})[]{$b_{\i}$};
  %\node at ({2.5*cos(360+15*\i)},{2.5*sin(360+15*\i)})[]{$c_{\i}$};
}
\draw [very thick] (a0) to (b0);
\draw [very thick] (a1) to (b1);
\draw (b2) to (c2);
\draw (b4) to (c4);
\draw [very thick] (c3) to (a3);
\end{tikzpicture}
\begin{tikzpicture}
\draw (0,0) circle (1);
\foreach \i in {0,...,4}{
  \node (a\i) at ({cos(120+15*\i)},{sin(120+15*\i)})[r]{};
  \node (b\i) at ({cos(240+15*\i)},{sin(240+15*\i)})[r]{};
  \node (c\i) at ({cos(360+15*\i)},{sin(360+15*\i)})[r]{};
  %\node at ({2.5*cos(120+15*\i)},{2.5*sin(120+15*\i)})[]{$a_{\i}$};
  %\node at ({2.5*cos(240+15*\i)},{2.5*sin(240+15*\i)})[]{$b_{\i}$};
  %\node at ({2.5*cos(360+15*\i)},{2.5*sin(360+15*\i)})[]{$c_{\i}$};
}
\draw [very thick] (a0) to (b0);
\draw [very thick] (a1) to (b1);
\draw (b3) to (c3);
\draw (b4) to (c4);
\draw [very thick] (c2) to (a2);
\end{tikzpicture}
\begin{tikzpicture}
\draw (0,0) circle (1);
\foreach \i in {0,...,4}{
  \node (a\i) at ({cos(120+15*\i)},{sin(120+15*\i)})[r]{};
  \node (b\i) at ({cos(240+15*\i)},{sin(240+15*\i)})[r]{};
  \node (c\i) at ({cos(360+15*\i)},{sin(360+15*\i)})[r]{};
  %\node at ({2.5*cos(120+15*\i)},{2.5*sin(120+15*\i)})[]{$a_{\i}$};
  %\node at ({2.5*cos(240+15*\i)},{2.5*sin(240+15*\i)})[]{$b_{\i}$};
  %\node at ({2.5*cos(360+15*\i)},{2.5*sin(360+15*\i)})[]{$c_{\i}$};
}
\draw (a0) to (b0);
\draw (a2) to (b2);
\draw [very thick] (b3) to (c3);
\draw [very thick] (b4) to (c4);
\draw [very thick] (c1) to (a1);
\end{tikzpicture}
\begin{tikzpicture}
\draw (0,0) circle (1);
\foreach \i in {0,...,4}{
  \node (a\i) at ({cos(120+15*\i)},{sin(120+15*\i)})[r]{};
  \node (b\i) at ({cos(240+15*\i)},{sin(240+15*\i)})[r]{};
  \node (c\i) at ({cos(360+15*\i)},{sin(360+15*\i)})[r]{};
  %\node at ({2.5*cos(120+15*\i)},{2.5*sin(120+15*\i)})[]{$a_{\i}$};
  %\node at ({2.5*cos(240+15*\i)},{2.5*sin(240+15*\i)})[]{$b_{\i}$};
  %\node at ({2.5*cos(360+15*\i)},{2.5*sin(360+15*\i)})[]{$c_{\i}$};
}
\draw (a1) to (b1);
\draw (a2) to (b2);
\draw [very thick] (b3) to (c3);
\draw [very thick] (b4) to (c4);
\draw [very thick] (c0) to (a0);
\end{tikzpicture}
\caption{Possible $ac$-chords for Case 1. Thickened edges intersect.}
\label{case1}
\end{figure}

For Case (2), this occurs for four of the five possibilities described in Figure \ref{case2}. In the remaining case, where we insert the chord $a_2c_2$, we obtain a graph that has $K_5$ as a minor.
\end{proof}

\begin{figure}[ht]
\begin{tikzpicture}
\draw (0,0) circle (1);
\foreach \i in {0,...,3}{
  \node (a\i) at ({cos(120+15*\i)},{sin(120+15*\i)})[r]{};
  \node (b\i) at ({cos(240+15*\i)},{sin(240+15*\i)})[r]{};
  \node (c\i) at ({cos(360+15*\i)},{sin(360+15*\i)})[r]{};
  %\node at ({2.5*cos(120+15*\i)},{2.5*sin(120+15*\i)})[]{$a_{\i}$};
  %\node at ({2.5*cos(240+15*\i)},{2.5*sin(240+15*\i)})[]{$b_{\i}$};
  %\node at ({2.5*cos(360+15*\i)},{2.5*sin(360+15*\i)})[]{$c_{\i}$};
  \node at (1.5, 0)[]{$\Rightarrow$};
}
\draw (a0) to (b0);
\draw (a2) to (b2);
\draw (b1) to (c1);
\draw (b3) to (c3);
\end{tikzpicture}
\begin{tikzpicture}
\draw (0,0) circle (1);
\foreach \i in {0,...,4}{
  \node (a\i) at ({cos(120+15*\i)},{sin(120+15*\i)})[r]{};
  \node (b\i) at ({cos(240+15*\i)},{sin(240+15*\i)})[r]{};
  \node (c\i) at ({cos(360+15*\i)},{sin(360+15*\i)})[r]{};
  %\node at ({2.5*cos(120+15*\i)},{2.5*sin(120+15*\i)})[]{$a_{\i}$};
  %\node at ({2.5*cos(240+15*\i)},{2.5*sin(240+15*\i)})[]{$b_{\i}$};
  %\node at ({2.5*cos(360+15*\i)},{2.5*sin(360+15*\i)})[]{$c_{\i}$};
}
\draw [very thick] (a0) to (b0);
\draw [very thick] (a2) to (b2);
\draw (b1) to (c1);
\draw (b3) to (c3);
\draw [very thick] (c4) to (a4);
\end{tikzpicture}
\begin{tikzpicture}
\draw (0,0) circle (1);
\foreach \i in {0,...,4}{
  \node (a\i) at ({cos(120+15*\i)},{sin(120+15*\i)})[r]{};
  \node (b\i) at ({cos(240+15*\i)},{sin(240+15*\i)})[r]{};
  \node (c\i) at ({cos(360+15*\i)},{sin(360+15*\i)})[r]{};
  %\node at ({2.5*cos(120+15*\i)},{2.5*sin(120+15*\i)})[]{$a_{\i}$};
  %\node at ({2.5*cos(240+15*\i)},{2.5*sin(240+15*\i)})[]{$b_{\i}$};
  %\node at ({2.5*cos(360+15*\i)},{2.5*sin(360+15*\i)})[]{$c_{\i}$};
}
\draw [very thick] (a0) to (b0);
\draw [very thick] (a2) to (b2);
\draw (b1) to (c1);
\draw (b4) to (c4);
\draw [very thick] (c3) to (a3);
\end{tikzpicture}
\begin{tikzpicture}
\draw (0,0) circle (1);
\foreach \i in {0,...,4}{
  \node (a\i) at ({cos(120+15*\i)},{sin(120+15*\i)})[r]{};
  \node (b\i) at ({cos(240+15*\i)},{sin(240+15*\i)})[r]{};
  \node (c\i) at ({cos(360+15*\i)},{sin(360+15*\i)})[r]{};
  %\node at ({2.5*cos(120+15*\i)},{2.5*sin(120+15*\i)})[]{$a_{\i}$};
  %\node at ({2.5*cos(240+15*\i)},{2.5*sin(240+15*\i)})[]{$b_{\i}$};
  %\node at ({2.5*cos(360+15*\i)},{2.5*sin(360+15*\i)})[]{$c_{\i}$};
}
\draw (a0) to (b0);
\draw (a3) to (b3);
\draw (b1) to (c1);
\draw (b4) to (c4);
\draw (c2) to (a2);
\end{tikzpicture}
\begin{tikzpicture}
\draw (0,0) circle (1);
\foreach \i in {0,...,4}{
  \node (a\i) at ({cos(120+15*\i)},{sin(120+15*\i)})[r]{};
  \node (b\i) at ({cos(240+15*\i)},{sin(240+15*\i)})[r]{};
  \node (c\i) at ({cos(360+15*\i)},{sin(360+15*\i)})[r]{};
  %\node at ({2.5*cos(120+15*\i)},{2.5*sin(120+15*\i)})[]{$a_{\i}$};
  %\node at ({2.5*cos(240+15*\i)},{2.5*sin(240+15*\i)})[]{$b_{\i}$};
  %\node at ({2.5*cos(360+15*\i)},{2.5*sin(360+15*\i)})[]{$c_{\i}$};
}
\draw (a0) to (b0);
\draw (a3) to (b3);
\draw [very thick] (b2) to (c2);
\draw [very thick] (b4) to (c4);
\draw [very thick] (c1) to (a1);
\end{tikzpicture}
\begin{tikzpicture}
\draw (0,0) circle (1);
\foreach \i in {0,...,4}{
  \node (a\i) at ({cos(120+15*\i)},{sin(120+15*\i)})[r]{};
  \node (b\i) at ({cos(240+15*\i)},{sin(240+15*\i)})[r]{};
  \node (c\i) at ({cos(360+15*\i)},{sin(360+15*\i)})[r]{};
  %\node at ({2.5*cos(120+15*\i)},{2.5*sin(120+15*\i)})[]{$a_{\i}$};
  %\node at ({2.5*cos(240+15*\i)},{2.5*sin(240+15*\i)})[]{$b_{\i}$};
  %\node at ({2.5*cos(360+15*\i)},{2.5*sin(360+15*\i)})[]{$c_{\i}$};
}
\draw (a1) to (b1);
\draw (a3) to (b3);
\draw [very thick] (b2) to (c2);
\draw [very thick] (b4) to (c4);
\draw [very thick] (c0) to (a0);
\end{tikzpicture}
\caption{Possible $ac$-chords for Case 2. The middle case does not have three intersecting edges.}
\label{case2}
\end{figure}
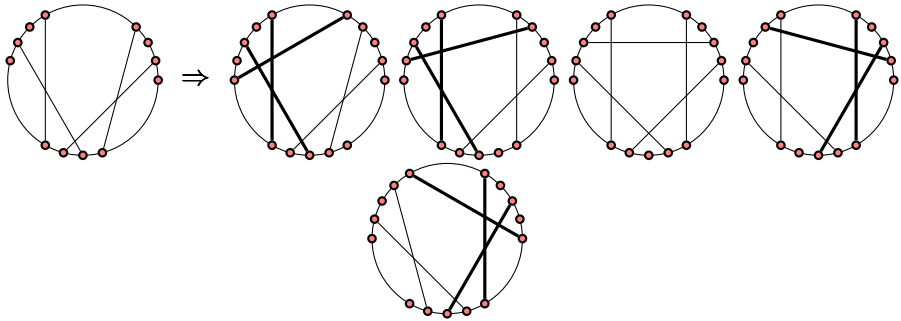

\begin{cor}
$\gamma(M_6) = 6$. 
\end{cor}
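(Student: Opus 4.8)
The plan is to read off the result directly from Milgram's genus formula (Theorem \ref{mainthm}) combined with the genus computation just completed (Theorem \ref{thm:T5G6}). Since $M_6$ is by definition the scaffolded graph $H(G_6, T_5)$, where $G_6 = C_{15}$ supplies $15 = 3\cdot 5$ distinct $2$-valent vertices grouped into the five disjoint triples $t_i = \{a_i,b_i,c_i\}$, Theorem \ref{mainthm} applies with $n = 5$ and gives
$$\gamma(M_6) \;=\; \gamma(H(G_6,T_5)) \;=\; \gamma_{T_5}(G_6) + 5.$$
Theorem \ref{thm:T5G6} establishes $\gamma_{T_5}(G_6) = 1$, so $\gamma(M_6) = 1 + 5 = 6$, which is the claim.

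Beyond that one-line deduction, I would add a sentence confirming the graph-theoretic bookkeeping that makes $M_6$ an interesting example: it is cubic (each cycle vertex of $G_6$ is joined to exactly one free node of a copy of $H_3$, raising its degree from $2$ to $3$, while the degree-$3$ hub vertices of each $K_{2,3}$ and the former degree-$2$ vertices carrying pendants all have degree $3$), and it has $15 + 5\cdot 5 = 40$ vertices, since each copy of $H_3$ contributes its two hubs and three middle vertices after the free nodes are identified with cycle vertices. One may also note $\beta(M_6) = \beta(G_6) + 4\cdot 5 = 1 + 20 = 21$, so that together with $\gamma(M_6) = 6$ this graph refutes Duke's conjecture (Conjecture \ref{duke}) for $\gamma = 6$ with the claimed Betti number.

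I do not anticipate any real obstacle here: the corollary is a pure specialization of the two preceding results, and the only thing to verify is that the hypotheses of Theorem \ref{mainthm} are met, namely that $G_6$ has at least $3\cdot 5$ two-valent vertices arranged in disjoint triples — which is immediate for the $15$-cycle with the stated triples.
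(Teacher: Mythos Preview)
Your argument is correct and is exactly the intended one: the corollary is an immediate consequence of Theorem~\ref{mainthm} with $n=5$ together with the value $\gamma_{T_5}(G_6)=1$ from Theorem~\ref{thm:T5G6}, and the paper does not even write out a separate proof. Your added bookkeeping about $M_6$ being cubic on $40$ vertices with $\beta(M_6)=21$ is accurate and matches the surrounding discussion.
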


We have seen in our proof of Theorem \ref{thm:T5G6} that four triples on a cycle are not enough to force non-planarity. However, suppose we form a new graph $G_5$ by removing the triple $t_0 = \{a_0, b_0, c_0\}$ and adding an edge between $a_0$ and $b_0$. Denote the new set of triples as $T_4$ and define $M_5 = H(G_5, T_4)$. Since every graph in $S(G_5, T_4)$ is also in $S(G_6, T_5)$, it follows that $\gamma_{T_4}(G_5) = 1$ and $\gamma(M_5) = 5$. Moreover, removing the triple $\{a_4, b_4, c_4\}$ from $T_4$ in $G_5$ and adding an edge between $a_4$ and $b_4$ gives us Milgram's graph $M_4$.

\begin{cor}[Milgram \cite{mi}]
$\gamma(M_4) = 4$.
\end{cor}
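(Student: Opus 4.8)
The plan is to deduce this exactly as the paper has already done for $M_5$, by combining Milgram's Theorem \ref{mainthm} with the genus computation in Theorem \ref{thm:T5G6}. Write $M_4 = H(G_4, T_3)$ with $T_3 = \{t_1, t_2, t_3\}$, where $G_4$ is obtained from $G_5$ by deleting the triple $t_4$ and inserting the edge $a_4 b_4$; equivalently, $G_4$ is the cycle $G_6 = C_{15}$ carrying the two ``baked-in'' chords $a_0 b_0$ and $a_4 b_4$ (with the now $2$-valent, scaffold-free vertices $c_0$ and $c_4$ smoothed away). Since $M_4$ is built from $n = 3$ copies of $H_3$, Theorem \ref{mainthm} gives $\gamma(M_4) = \gamma_{T_3}(G_4) + 3$, so the entire task reduces to showing $\gamma_{T_3}(G_4) = 1$.

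For the lower bound I would use the chain of inclusions $S(G_4, T_3) \subseteq S(G_5, T_4) \subseteq S(G_6, T_5)$: any graph obtained from $G_4$ by resolving $t_1, t_2, t_3$ into chords coincides with the graph obtained from $G_6$ by resolving $t_1, t_2, t_3$ in the same way and additionally choosing the $ab$-chords $a_0 b_0$ for $t_0$ and $a_4 b_4$ for $t_4$. Because a minimum over a subfamily is at least the minimum over the full family, this yields $\gamma_{T_3}(G_4) \geq \gamma_{T_5}(G_6) = 1$, the last equality being Theorem \ref{thm:T5G6} (whose proof in fact shows that \emph{no} member of $S(G_6, T_5)$ is planar).

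For the upper bound, recall from the proof of Theorem \ref{thm:T5G6} that $C_{15}$ together with the four chords $a_0 b_0, a_1 b_1, b_2 c_2, b_3 c_3$ is planar, so adjoining any single further chord $x_4 y_4$ produces a graph of genus at most $1$. Taking $x_4 y_4 = a_4 b_4$, the resulting graph is precisely $G_4$ with $t_1, t_2, t_3$ resolved as $a_1 b_1, b_2 c_2, b_3 c_3$, respectively; this graph lies in $S(G_4, T_3)$ and has genus at most $1$, so $\gamma_{T_3}(G_4) \leq 1$. Combining the two bounds gives $\gamma_{T_3}(G_4) = 1$, and hence $\gamma(M_4) = 1 + 3 = 4$.

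Since everything is inherited from results already proved, there is no genuine obstacle; the only point requiring care is the bookkeeping — verifying that the two fixed chords $a_0 b_0$ and $a_4 b_4$ of $G_4$ correspond to admissible chord choices for $t_0$ and $t_4$, so that the inclusion $S(G_4, T_3) \subseteq S(G_6, T_5)$ is literally correct, and that the planar four-chord configuration used for the upper bound really does admit $a_4 b_4$ as the fifth chord. (If one wanted an argument matching Milgram's original presentation of $G_4$ as $C_9$ with two overlapping chords, one would additionally identify that presentation with the one used here by smoothing $2$-valent vertices, but this is unnecessary given Theorem \ref{thm:T5G6}.)
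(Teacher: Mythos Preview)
Your proposal is correct and follows exactly the paper's own approach: the paper obtains $G_4$ from $G_5$ by removing the triple $t_4$ and inserting the chord $a_4b_4$, then uses the inclusion $S(G_4,T_3)\subseteq S(G_5,T_4)\subseteq S(G_6,T_5)$ together with Theorem~\ref{thm:T5G6} to conclude $\gamma_{T_3}(G_4)=1$, whence $\gamma(M_4)=4$ by Theorem~\ref{mainthm}. You have simply made explicit the upper-bound witness (the planar four-chord configuration plus $a_4b_4$) that the paper leaves implicit.
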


\bigskip
\section{Maximum genus of scaffolded graphs} %% section 4. 

For any spanning tree $T$ of a graph $G$, we define the \emph{(Xuong) deficiency} $\xi(G, T)$ to be the number of components of $G-T$ with an odd number of edges. The deficiency of a graph $\xi(G)$ is defined to be the minimum deficiency over all spanning trees $T$, and any tree that achieves this minimum is called a \emph{Xuong tree}. Xuong \cite{xu} proved the following relationship between deficiency and maximum genus.

\begin{thm}[Xuong \cite{xu}]
$\Gamma(G) = \frac{1}{2}(\beta(G)-\xi(G)).$
\end{thm}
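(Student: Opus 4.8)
The plan is to recast the identity as a statement about the smallest possible number of faces of an embedding of $G$, prove the two matching inequalities for that quantity, and isolate a single combinatorial-topological fact about absorbing pairs of cotree edges as the technical heart.

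First I would reduce to connected $G$, since $\beta$, $\Gamma$, and $\xi$ are all additive over components. For connected $G$, Euler's formula gives $\gamma(\Pi) = \tfrac12(\beta(G) + 1 - f(\Pi))$ for every 2-cell embedding $\Pi$ with $f(\Pi)$ faces, so $\Gamma(G) = \tfrac12(\beta(G) + 1 - f_{\min}(G))$ where $f_{\min}(G)$ is the least number of faces over all embeddings of $G$. The theorem thus becomes the equality $f_{\min}(G) = \xi(G) + 1$, which I would prove by establishing ``$\le$'' and ``$\ge$'' in turn.

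For $f_{\min}(G) \le \xi(G) + 1$ (equivalently, the genus lower bound), fix a Xuong tree $T$, start from the one-face embedding of $T$ on the sphere, and insert the $\beta(G)$ cotree edges. The combinatorial ingredient is the classical fact that a connected graph with $e$ edges decomposes into $\lfloor e/2\rfloor$ edge-disjoint paths of length two (``cherries''), plus one leftover edge exactly when $e$ is odd; apply this inside each component of $G - T$. The topological ingredient is that a cherry formed of two cotree edges can always be inserted into the current embedding so that the number of faces is unchanged---hence the genus goes up by one---whereas a leftover single cotree edge merely splits a face. Tracking the count produces an embedding with $1 + \sum_i (e_i \bmod 2) = 1 + \xi(G)$ faces. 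For the reverse inequality $f_{\min}(G) \ge \xi(G) + 1$, take any embedding $\Pi$ with $f$ faces, choose a spanning tree $\mathcal{T}$ of the dual graph of $\Pi$ (so $|E(\mathcal{T})| = f - 1$), and let $A \subseteq E(G)$ be the primal edges corresponding to $E(\mathcal{T})$. Deleting $A$ contracts the dual along a spanning tree, so $G - A$ is connected, spanning, and carries a one-face embedding; in particular $\beta(G - A) = 2\gamma(\Pi)$ is even, so $G - A$ has a spanning tree $T$ all of whose cotree components have an even number of edges (the one subtlety, addressed below). Now $G - T = A \cup ((G-A) - T)$, and since re-attaching a single edge to a graph changes the number of components with an odd number of edges by $\pm 1$, adding back the $f - 1$ edges of $A$ yields $\xi(G, T) \le f - 1$; hence $\xi(G) \le f - 1$ for every embedding, so $f_{\min}(G) \ge \xi(G)+1$.

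The main obstacle is the pair of ``absorption'' facts used above: that two cotree edges forming a cherry can be added to any embedding without increasing the face count, and---dually, and needed as the base case $f = 1$ of the second inequality---that a graph with a one-face embedding and even Betti number has a spanning tree whose cotree components all have an even number of edges. These are really two faces of the same coin, and it is here that one genuinely uses that the relevant edges are cotree edges (so the cycle rank, not merely the edge count, increases): to add a cherry $uv, vw$ one inserts $uv$ first---either splitting a face containing $u$ and $v$, or, if $u$ and $v$ lie on different faces, running it over a new handle---and then shows that $vw$ can be routed, possibly over a handle, so as to cancel the resulting face-count change, the delicate case being when $u$, $v$, and $w$ all occur, with repetitions, on a single face boundary. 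Pinning down this case analysis, together with verifying the cherry-decomposition lemma so the running subgraph stays connected at every step, is where the real work lies; the Euler-formula reductions and the dual-tree argument are comparatively routine.
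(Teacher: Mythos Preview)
The paper does not prove this theorem at all: it is quoted from Xuong~\cite{xu} as a black box and immediately applied in Proposition~\ref{scaffold-upper}. There is therefore no ``paper's own proof'' to compare your proposal against.

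As for the proposal itself, the constructive direction is essentially Xuong's original argument and your outline is sound: starting from a Xuong tree with its one-face planar embedding and absorbing adjacent pairs of cotree edges one cherry at a time---always returning to a single face---does give an embedding with $1+\xi(G,T)$ faces. Your dual-spanning-tree reduction for the reverse inequality is clean and the parity claim (that adding one edge to the cotree changes the number of odd components by exactly $\pm 1$) is correct. However, you have correctly located but not closed the real gap: the assertion that a graph with a one-face embedding and even Betti number has a spanning tree with deficiency zero is precisely the $f=1$ instance of the inequality you are trying to prove, so the reduction by itself is not yet a proof. To finish, you would need an independent argument for that base case---for example, an induction on $\beta$ that uses the single face boundary to locate two adjacent cotree edges whose deletion preserves both one-face embeddability and the parity of the remaining cotree components. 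Until that lemma is established, the plan remains an outline rather than a proof.
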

\smallskip

Using this result, we can show that all the graphs $M_4$, $M_5$, and $M_6$ from the previous section are upper-embeddable. 
\smallskip

\begin{prop}
If $G$ is an upper-embeddable graph, then $H(G, T_n)$ is upper-embeddable for all choices of the set $T_n$ of triples. 
\label{scaffold-upper}
\end{prop}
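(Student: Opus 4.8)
The plan is to prove the case $n=1$ and then induct on $n$. Because the triples in $T_n$ use $3n$ distinct $2$-valent vertices of $G$, attaching the copies of $H_3$ corresponding to $t_0,\dots,t_{n-2}$ leaves the three vertices of $t_{n-1}$ untouched, so they remain $2$-valent and $H(G,T_n)=H\big(H(G,T_{n-1}),\{t_{n-1}\}\big)$. Thus it suffices to show that attaching a single copy of $H_3$ to three $2$-valent vertices of an upper-embeddable graph produces an upper-embeddable graph; the proposition then follows by induction, with the hypothesis on $G$ as the base case.

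For the single-copy step I would use Xuong's theorem in the equivalent form ``$X$ is upper-embeddable if and only if $\xi(X)\le 1$'' (this follows from Xuong's formula together with the fact that $\xi(X)\equiv\beta(X)\pmod 2$, since the cotree of any spanning tree of $X$ has exactly $\beta(X)$ edges). Write $t_0=\{u,v,w\}$, and describe the attached copy of $H_3$ by its $K_{2,3}$-parts $\{p,q\}$ and $\{x,y,z\}$ (with $x,y,z$ the degree-$2$ vertices of $K_{2,3}$) together with the three edges $xu$, $yv$, $zw$ obtained by identifying the free nodes of the copy with $u,v,w$. Starting from a Xuong tree $T$ of $G$, so that $\xi(G,T)\le 1$, I would form a spanning tree $T'$ of $H(G,\{t_0\})$ by adjoining the five edges $xu,\ yv,\ zw,\ px,\ qx$; adding these in this order attaches, one at a time, the new vertices $x$, then $y$, $z$, $p$, $q$ to the tree built so far, so $T'$ is indeed a spanning tree.

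It remains to read off the cotree $H(G,\{t_0\})-T'$. All of the newly added vertices $x,y,z,p,q$ and all of the newly added tree edges $xu,yv,zw$ lie in the $H_3$ part, so $G-T$ appears unchanged inside $H(G,\{t_0\})-T'$; the only other cotree edges are $py$, $pz$, $qy$, $qz$, and these form a single $4$-cycle through $p,y,q,z$. A component with an even number of edges contributes nothing to the deficiency, so $\xi\big(H(G,\{t_0\})\big)\le\xi\big(H(G,\{t_0\}),T'\big)=\xi(G,T)\le 1$, and $H(G,\{t_0\})$ is upper-embeddable. The one point that needs care is precisely this last computation: the five tree edges taken from $H_3$ must be chosen so that the four leftover $K_{2,3}$-edges form an even-sized piece that is disconnected from $G-T$. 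The device that makes it work is to route into $T'$ the free edge at, and both $K_{2,3}$-edges at, one of the degree-$2$ vertices of $K_{2,3}$ (here $x$), so that this vertex contributes no cotree edge at all while the remaining four edges of $K_{2,3}$ close up into a $4$-cycle; and since attaching $H_3$ raises $\beta$ by $4$ and $\lfloor(\beta+4)/2\rfloor=\lfloor\beta/2\rfloor+2$, no parity obstruction can arise, whatever the parity of $\beta(G)$.
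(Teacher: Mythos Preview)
Your proof is correct and follows essentially the same approach as the paper: both extend a Xuong tree of $G$ through each attached $H_3$ by taking all three free edges plus two $K_{2,3}$-edges, leaving four cotree edges per copy that form an even-sized component disconnected from $G-T$, so the deficiency is unchanged. The only cosmetic differences are that the paper treats all $n$ copies simultaneously rather than inducting, and its particular choice of the two $K_{2,3}$ tree-edges leaves a four-edge tree in the cotree rather than your $4$-cycle---the paper in fact remarks that any extension using all three free edges works, which covers your choice as well.
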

\begin{proof}
Let $T$ be any Xuong tree of $G$. We extend the tree $T$ into a spanning tree of $H(G, T_n)$ with the thickened edges shown in Figure \ref{xuong} for every copy of $H_3$ in $H(G, T_n)$. Since this operation augments a component of $G-T$ only by an even number of edges, the deficiency remains the same, 0 or 1, which implies that $H(G, T_n)$ is also upper-embeddable. Note that any extension of $T$ that uses all three free edges will work.
\end{proof}

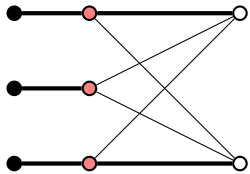
\begin{figure}[ht]
\begin{tikzpicture}
\node (v1) at (3,0)[w]{};
\node (v2) at (1,0)[v]{};
\node (v3) at (1,1)[v]{};
\node (v4) at (1,2)[v]{};
\node (v5) at (3,2)[w]{};
\node (w2) at (0, 0)[b]{};
\node (w3) at (0, 1)[b]{};
\node (w4) at (0, 2)[b]{};
\draw (v1) to (v2) to (v5);
\draw (v1) to (v3) to (v5);
\draw (v1) to (v4) to (v5);
\draw [ultra thick](v1) to (v2);
\draw [ultra thick](v4) to (v5);
\draw [ultra thick](v2) to (w2);
\draw [ultra thick](v3) to (w3);
\draw [ultra thick](v4) to (w4);
\end{tikzpicture}
\caption{Extending a Xuong tree of $G$ to include the attached $H_3$.}
\label{xuong}
\end{figure}

\begin{cor}
The maximum genera of the graphs $M_4$, $M_5$, and $M_6$ are $7$, $9$, and $10$, respectively. 
\end{cor}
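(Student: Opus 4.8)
The plan is to show that $M_4$, $M_5$, and $M_6$ are all upper-embeddable; the stated maximum genera then follow at once, since $\Gamma(G) = \lfloor \beta(G)/2 \rfloor$ for any upper-embeddable graph. So only two things need checking: the Betti numbers, and the upper-embeddability.

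For the Betti numbers, recall that $M_4 = H(G_4, T_3)$, $M_5 = H(G_5, T_4)$, and $M_6 = H(G_6, T_5)$ are built from their base graphs by attaching copies of $H_3$, each attachment raising the Betti number by $4$. The base graphs are $G_6 = C_{15}$, $G_5 = C_{15} + \{a_0 b_0\}$, and $G_4 = C_{15} + \{a_0 b_0, a_4 b_4\}$, with Betti numbers $1$, $2$, and $3$; hence $\beta(M_6) = 1 + 5\cdot 4 = 21$, $\beta(M_5) = 2 + 4\cdot 4 = 18$, and $\beta(M_4) = 3 + 3\cdot 4 = 15$, so that $\lfloor \beta(M_6)/2 \rfloor = 10$, $\lfloor \beta(M_5)/2 \rfloor = 9$, and $\lfloor \beta(M_4)/2 \rfloor = 7$, exactly the claimed values.

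For the upper-embeddability I would invoke Proposition \ref{scaffold-upper}: since each $M_i$ is a scaffolded graph over $G_i$, it is upper-embeddable as soon as $G_i$ is. The cycle $G_6 = C_{15}$ has a two-face planar embedding, so $\Gamma(G_6) = 0 = \lfloor 1/2 \rfloor$. For $G_5$ and $G_4$ I would, using the cyclic order $a_0,\dots,a_4,b_0,\dots,b_4,c_0,\dots,c_4$ on $C_{15}$, exhibit spanning trees whose cotrees are connected, which pins the Xuong deficiency down to $\beta \bmod 2$: for $G_5$, delete the chord $a_0 b_0$ and the cycle edge $a_0 a_1$ (what remains is a Hamiltonian path, and the two deleted edges meet at $a_0$), giving $\xi(G_5) = 0$; for $G_4$, delete the two chords together with the cycle edge $b_0 a_4$ (again what remains is a Hamiltonian path, and the three deleted edges form the path $a_0 b_0 a_4 b_4$), giving $\xi(G_4) = 1$. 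By Xuong's theorem $\Gamma(G_5) = \frac12(2-0) = 1$ and $\Gamma(G_4) = \frac12(3-1) = 1$, so $G_4$ and $G_5$ are upper-embeddable, hence so are $M_4$, $M_5$, $M_6$, and their maximum genera are $7$, $9$, and $10$.

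I do not expect any genuine obstacle here. The only points needing care are verifying that the listed edge sets really are cotrees (i.e.\ that their complements in $G_4$, $G_5$ are connected and acyclic) and that $b_0 a_4$ is indeed an edge of $C_{15}$ under the stated vertex ordering; one could instead simply note that $G_5$ is homeomorphic to the $3$-dipole and $G_4$ is homeomorphic to $K_4$, both classically upper-embeddable, but the explicit Xuong trees keep the argument self-contained.
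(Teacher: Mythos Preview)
Your proof is correct and follows exactly the route the paper intends: invoke Proposition~\ref{scaffold-upper} to reduce to upper-embeddability of the base graphs, then read off $\Gamma=\lfloor\beta/2\rfloor$. The paper leaves the corollary unproved because the base cases are immediate (a cycle, a theta graph, and a $K_4$ up to homeomorphism), whereas you spell out the Xuong trees and the Betti-number bookkeeping explicitly; the added detail is sound and the edge $a_4b_0$ is indeed a cycle edge under the stated ordering.
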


With regards to Conjecture \ref{nsw}, all the known counterexamples are either graphs of large girth or scaffolded graphs, and they also violate Duke's conjecture. It seems difficult to construct cubic graphs irreducible for a surface that are not upper-embeddable, and we go as far as to conjecture that no such graph exists. One topic for further exploration is to find a graph that violates Conjecture \ref{nsw} but satisfies Duke's conjecture. Proposition \ref{scaffold-upper} can be thought of as a negative result in this regard.

\bigskip
\section{Duke's conjecture for $\gamma < 4$}  %% section 5

The iterated bar-amalgamation of $M_4$ with $n$ of copies of $K_{3,3}$ yields a graph whose Betti number is $\beta(M_4)+4n$ and whose minimum genus is $\gamma(M_4)+n$.  Thus, it is a counterexample to Duke's conjecture for all $\gamma \geq 4$. 

In this section, we discuss some work on the remaining cases implying that Duke's conjecture is true for $\gamma < 4$.  It suffices to restrict our search for counterexamples to cubic graphs, since splitting a vertex of degree greater than 3 yields a graph of the same Betti number, without decreasing the minimum genus. Furthermore, deleting vertices of degree 1 and smoothing vertices of degree 2 do not change the minimum genus. 

The Betti number is nonnegative, so for $\gamma = 0$, Duke's conjecture is trivially true.  As mentioned earlier, it is true for $\gamma = 1$, due to Kuratowski's theorem. Milgram (see \cite{mu}) found that all cubic graphs with Betti number 7 can be embedded in the torus, which implies that Duke's conjecture is also true for $\gamma = 2$. By an exhaustive enumeration, Milgram also calculated that no graph of Betti number at most 10 has genus 3.

In order to reduce the amount of computer calculation, we require a well-known fact about irreducible graphs.

\begin{prop}[e.g. Glover \emph{et al.} \cite{ghw}]
A cubic graph which is irreducible for a surface must be triangle-free.
\end{prop}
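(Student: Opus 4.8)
The plan is a short proof by contradiction. Suppose $G$ is cubic and irreducible for $S$ but contains a triangle $T$ on the vertices $\{x,y,z\}$; I will exhibit a proper subgraph of $G$ that does not embed in $S$, contradicting irreducibility. Consider first the generic situation in which the three edges of $G$ leaving $T$ go to distinct vertices $x',y',z'$ outside $T$ (so $xx',yy',zz'$ are these edges). Let $G' = G/T$ be the multigraph obtained by contracting the triangle to a single vertex $t$; then $t$ is trivalent, joined to $x',y',z'$, and $G'$ agrees with $G$ elsewhere.

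The argument rests on two observations. First, $G - yz$ is homeomorphic to $G'$: deleting the edge $yz$ leaves $y$ and $z$ two-valent, and smoothing them produces exactly $G'$, with $x$ in the role of $t$. Since homeomorphic graphs have the same minimum genus, $\gamma(G-yz) = \gamma(G')$. Second, $\gamma(G) \le \gamma(G')$: given any embedding of $G'$ in a surface $\Sigma$, take a disk neighbourhood of $t$, in which the three edge-ends at $t$ appear in the cyclic order dictated by the rotation at $t$; replace the interior of this disk by a small triangle with corners $x,y,z$ and reconnect the three edge-ends to the corners. This is a planar modification performed entirely inside the disk, so it yields an embedding of $G$ in the \emph{same} surface $\Sigma$; equivalently, the modification replaces one vertex by three and adds three edges and one face, leaving the Euler characteristic unchanged. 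Hence $\gamma(G) \le \gamma(G')$.

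Now chain the inequalities: since $G - yz$ is a proper subgraph of the irreducible graph $G$ it embeds in $S$, so
$$\gamma(S)\ \ge\ \gamma(G - yz)\ =\ \gamma(G')\ \ge\ \gamma(G)\ >\ \gamma(S),$$
the last inequality because $G$ does not embed in $S$. This contradiction shows $G$ is triangle-free.

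I expect the one point needing genuine care to be the second observation — the assertion that blowing a trivalent vertex up into a triangle leaves the host surface unchanged — which is cleanly justified either by the disk-surgery picture above or by writing out the modified rotation system and verifying the face count. A secondary nuisance is the remaining non-generic configurations, where two of $x',y',z'$ coincide or one of them lies in $T$ (i.e.\ a parallel edge at the triangle): in these cases $G'$ is still a well-defined multigraph, possibly with a loop at $t$, such a loop may be discarded without changing the genus, and the same chain of inequalities applies; alternatively such graphs can be checked directly to be reducible. Connectivity of $G$ plays no role.
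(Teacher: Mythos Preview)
Your proof is correct. Both your argument and the paper's ultimately establish that deleting one edge of the triangle does not lower the genus, and then derive a contradiction from irreducibility; however, the routes differ. The paper observes directly that in any embedding of $G-v_1v_2$ the vertices $v_1$ and $v_2$ lie on a common face, simply because both are adjacent to the degree-$3$ vertex $v_3$; hence the edge $v_1v_2$ can be reinserted in that face without adding a handle. You instead pass through the contraction $G' = G/T$ and invoke the $Y$--$\Delta$ blow-up to get $\gamma(G')\ge\gamma(G)$, together with the homeomorphism $G-yz \cong G'$. Your detour through $G'$ is a valid repackaging of the same phenomenon---the disk surgery you describe at $t$ is precisely what places $x,y,z$ on a common face---but it costs you the extra bookkeeping of the contraction and the non-generic cases with coincident neighbours or loops. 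The paper's version sidesteps all of that in one line.
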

\begin{proof}
Suppose that $G$ is cubic and irreducible for $S_k$, and let $v_1, v_2, v_3$ be three vertices of some triangle of $G$. Then, $G-v_1v_2$ embeds onto $S_{k-1}$. However, since $v_3$ is of degree 3, the vertices $v_1$ and $v_2$ lie on the same face of that embedding, which is a contradiction.  
\end{proof}

Our own computation verifies the calculations done by Milgram. Using McKay's \texttt{geng} program \cite{mc}, we generated the 97292 simple, triangle-free, 2-connected cubic graphs on 20 vertices and found that there are 151, 33956, and 63185 graphs of genus 0, 1, and 2, respectively, and none of genus 3 or higher. Thus, for any cubic graph of genus 3, the number of vertices is at least 22, and, accordingly, the Betti number is at least 12. One can infer the following result:

\begin{thm}
Duke's conjecture is true for $\gamma \leq 3$.
\label{duke3}
\end{thm}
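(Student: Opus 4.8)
The plan is to reduce Duke's conjecture for $\gamma \le 3$ to a finite computation on cubic graphs and then invoke the enumeration already described. First I would observe, as in the preceding discussion, that it suffices to consider cubic graphs: splitting a vertex of degree $> 3$ preserves the Betti number and does not decrease the minimum genus, while deleting degree-1 vertices and smoothing degree-2 vertices changes neither $\gamma$ nor $\beta$. Moreover, a minimal counterexample can be taken to be irreducible for the surface in question, hence 2-connected, and by the proposition of Glover \emph{et al.} it is triangle-free.

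Next I would handle the three nontrivial values of $\gamma$ in turn. For $\gamma = 1$, Duke's conjecture asserts $\beta(G) \ge 4$, which is immediate from Kuratowski's theorem since every nonplanar graph contains a subdivision of $K_{3,3}$ ($\beta = 4$) or $K_5$ ($\beta = 6$). For $\gamma = 2$ I would cite Milgram's result (via \cite{mu}) that every cubic graph with Betti number $7$ embeds in the torus; since $\beta(G) \ge 4\gamma(G) = 8$ is what must be shown and a cubic graph has $\beta = |V|/2 + 1$, the only case to rule out is $\beta = 7$, i.e. $|V| = 12$, which Milgram's calculation does. For $\gamma = 3$, the target bound is $\beta(G) \ge 12$, equivalently $|V| \ge 22$ for a cubic graph; so it suffices to show no cubic graph on at most $20$ vertices has genus $3$. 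By the reductions above, any such graph may be assumed simple, $2$-connected, and triangle-free, and on $20$ vertices there are exactly $97292$ of these, generated by McKay's \texttt{geng}. Running a genus computation on all of them yields $151$, $33956$, and $63185$ graphs of genus $0$, $1$, $2$ respectively and none of genus $\ge 3$, confirming Milgram's earlier enumeration. Hence the smallest cubic graph of genus $3$ has at least $22$ vertices and Betti number at least $12$.

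Combining the cases gives $g(k) = 4k$ for $k = 0, 1, 2, 3$, so Duke's conjecture holds for $\gamma \le 3$.

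\textbf{Main obstacle.} The genuine difficulty is entirely computational rather than structural: one must certify that the genus-computation over all $97292$ candidate graphs on $20$ vertices is correct and exhaustive, which is why independent verification of Milgram's count is worth doing. The structural reductions (to simple, $2$-connected, triangle-free cubic graphs) are what make the search space small enough to be feasible, and checking that these reductions are valid — in particular that passing to an irreducible subgraph does not increase $\gamma$ and that triangle-freeness may be assumed — is the only place where care is needed on the theoretical side.
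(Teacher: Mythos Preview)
Your proposal is correct and follows essentially the same route as the paper: reduce to simple, 2-connected, triangle-free cubic graphs via the standard vertex-splitting/smoothing argument and the Glover--Huneke--Wang proposition, dispatch $\gamma=1$ by Kuratowski and $\gamma=2$ by Milgram's toroidal check at $\beta=7$, and settle $\gamma=3$ by the \texttt{geng} enumeration of the $97292$ candidates on $20$ vertices. The only small slip is that you say ``at most $20$ vertices'' but then count only the $20$-vertex graphs; the paper closes this by citing Milgram's earlier exhaustive check for $\beta\le 10$, so you should either invoke that or note that any smaller cubic example can be grown (by subdividing two non-adjacent edges and joining the new vertices) into a $20$-vertex one without lowering the genus.
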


\section{Conclusion}   %% Section 6

We applied Milgram's scaffolding construction in order to exhibit new counterexamples to Duke's conjecture and provide sharper upper bounds on the minimum possible Betti number for graphs of a given genus. In explaining the construction, we gave a simpler proof of correctness in the form of Proposition \ref{prop:scaffold}. In particular, the extension given in Figure \ref{handle} is  slightly more direct than Milgram's \cite{mi}, and our proof that two handles are necessary in Case (3) is significantly shorter. We also demonstrated that Milgram's construction often produces upper-embeddable graphs, so it does not give us stronger counterexamples for Conjecture \ref{nsw}.

Finally we demonstrated that Duke's conjecture is true for $\gamma \leq 3$, completing an unfinished exhaustive computation of Milgram, as reported in \cite{mu}.

\end{document}